\documentclass[a4paper,twoside]{article}
%journal of statistical physics

%%%%%%%%%%packages%%%%%%%%%%%%%%%%%%%%%%%%%%%%%%%%%%%%

%%%XeLaTeX%%%%%%%%%%%%%%%%%%%
% \usepackage{polyglossia}
% \setmainlanguage{english}
% \usepackage{fontspec}
% \usepackage{xeCJK}
% \setCJKmainfont{AR PL UKai CN}
%%%%%%%%%%pdflatex%%%%%%%%%%%%%%%
\usepackage[english]{babel}
\usepackage[utf8x]{inputenc}
%\usepackage{mathptmx}
%%%%%%%%%%%%%%%%%%%%%%%%%%%%%%%%%
\usepackage{amsmath,amssymb,amsthm}%
\usepackage{eucal}
\usepackage[dvipsnames]{xcolor}
\usepackage{graphicx}
\usepackage{placeins}

\usepackage{cite}

\usepackage[margin=1in]{geometry}% 1.5 by default

%%%%%%commands%%%%%%%%%%%%%%%%%%%%%%%%%%%%%%%%%%%%%%%
\renewcommand*{\d}{\mathop{}\!\mathrm{d}}
\def\pp{\partial}

\def\a{\alpha}
\def\b{\beta}
\def\de{\delta}
\def\s{\sigma}
\def\S{\Sigma}
\def\l{\lambda}
\def\L{\Lambda}
\def\g{\gamma}

\def\f{\frac}
\def\ra{\rightarrow}

\def\Prb{\mathbf{P}}
\def\Mean{\mathbf{E}}

\newcounter{for}[section]
\newcommand{\be}[1][]{\addtocounter{for}{1} \begin{equation}\label{#1}}
\def\ee{\end{equation}}

\newcommand{\bk}[1]{\ensuremath{\langle#1\rangle}}

\newcommand{\wt}[1]{\tilde{#1}}
\renewcommand{\vec}[1]{\ensuremath{\boldsymbol{#1}}}

\newcommand{\m}[1]{\mathcal{#1}}
\newcommand{\mbb}[1]{\mathbb{#1}} %requires amsmath

\DeclareMathOperator{\Var}{\mathrm{Var}}

\theoremstyle{definition}
\newtheorem{rem}{remark}[section]
\newtheorem{theorem}{Theorem}[section]

\numberwithin{equation}{section}

\title{\bf A Curie-Weiss model with dissipation}

\author{Paolo Dai Pra, Markus Fischer, Daniele Regoli\\[1ex]
	Dipartimento di Matematica,
	Universit\`a degli Studi di Padova,\\
	via Trieste 63,
	I-35121 Padova,
	Italy\\
	e-mail: daipra@math.unipd.it, fischer@math.unipd.it, regoli@math.unipd.it
}

\date{\today}

%\journalname{Journal of Statistical Physics}

\begin{document}
%%\fontspec{Gentium}

\maketitle

\begin{abstract}
We consider stochastic dynamics for a spin system with mean field interaction, in which the interaction potential is subject to noisy and dissipative stochastic evolution. We show that, in the thermodynamic limit and at sufficiently low temperature, the magnetization of the system has a time periodic behavior, despite of the fact that no periodic force is applied.
\end{abstract}

%\keywords{Mean field models \and random potential \and noise-induced periodicity.}

%\subjectclass{60K35 \and 82C31}

%\tableofcontents

\section{Introduction}

Dynamics of stochastic systems with many interacting components are often described in terms of the {\em interaction energy} $H(x) = H(x_1,x_2,\ldots,x_N)$, by stochastic equations of the form
\be[1]
\d X_t = - \nabla H(X_t)\d t + \s \d B_t,
\ee
where $\s$ is the diffusion constant and $B$ is a Brownian Motion. The collective $N \ra +\infty$ behavior depends on the {\em entropy-energy balance}:
the stochastic term $\s \d B_t$ in \eqref{1} is a source of disorder, and it competes with the {\em drift} $- \nabla H(X_t)$ which tends to ``freeze'' the system in the states of minimal energy.

This structure, which admits various versions with discrete time and/or space, has proved to be successful in many applications, starting from physics but then developing in other fields such as Economics, Biology and Engineering. 

A substantial modification of the structure above consists in introducing an interaction energy which is not a {\em given} deterministic function of the state, but it has its own (possibly random) evolution. 
Models of this type have been recently proposed, independently, in various fields of research, but in particular in Biology and Economics. A first example (see e.g. \cite{Schweitzer}) is provided by the following modification of \eqref{1}, that we write componentwise:

\begin{eqnarray} \label{2}
\d X^i_t & = & \pp_x h(X^i_t, t) \d t + \s \d B^i_t \\ \label{3}
\pp_t h(x,t) & = & - \a h(x,t) + D \pp^2_x h(x,t) + \b \sum_{i=1}^N g(X^i_t,x).
\end{eqnarray}

This system has arisen, for instance, in cellular dynamics where cells tend to move toward positions $x$ of high concentration $h(x,t)$ of a given chemical (equation \eqref{2}).
This chemical is subject to dissipation ($\a >0$) and diffusion ($D>0$). Moreover, while moving, cells release themselves chemicals into the medium, thus modifying the concentration $h(x,t)$. 
It should be noticed that the model is of {\em mean field} type, in the sense that it is left invariant (in distribution) by permutations of cells. 
This is one of the instances where mean field models, which in physics are often regarded as toy models, play a more substantial role. 
A discrete-time version of this model has been recently studied in \cite{DelMoral}.

Cellular dynamics provides other examples of interacting systems. 
A very active field of research is that of synchronization in synthetic biology (see e.g. \cite{garcia2004modeling, mcmillen2002synchronizing, prindle2011sensing}). 
Concentrations of certain molecules within cells  vary over time due to chemical reactions, and may lead to periodic behavior. If $x$ denotes a vector of concentrations referred to a single cell, its evolution can be described by an ordinary differential equation 
\be[4]
\dot{x} = F(x)
\ee
which exhibits limiting cycles. Given a system of $N$ cells, various interaction mechanisms have been either observed or reproduced in experiments. 
As shown in \cite{garcia2004modeling}, suitable molecules can be ``pumped'' into the extracellular space, reach through the cell membrane the intracellular space and influence the dynamics of $x$. 
Denoting by $x_i$ the concentration vector of the $i$-th cell, by $S$ the extracellular concentration of the injected substance and by $s_i$ its concentration within cell $i$, \eqref{4} is modified as follows
\be[5]
\begin{array}{rcl}
\dot{x_i} & = & F(x_i) + g(s_i) \\
\dot{s_i} & = &  \varphi(x_i) - \b s_i + \g (S-s_i) \\
\dot{S} & = & - \a S + k(\overline{s} - S),
\end{array}
\ee
where $\overline{s} := \frac{1}{N} \sum_i s_i$. 
Random noise can be added to these equations: in all cases the interaction is driven by an ``external'' variable $S$, which is subject to dissipation ($\a>0$), and it is of mean-field type.

A third example we mention has been proposed in \cite{giesecke2011default}, it is related to finance and it models defaults in a network of firms. For $i=1,2,\ldots,N$, let $\s_i(t) \in \{0,1\}$ be the indicator of the $i$-th firm; in other words, $\s_i(t)= 0$ means that the $i$-th firm has defaulted. 
The dynamics is assigned by giving the default rates (probability of default per unit time) $\l_i(t)$. To model {\em clustering} of defaults, one can choose $\l_i = \l_i(\s_1,\ldots,\s_N)$ to be a {\em deterministic} function of $\s_1,\ldots,\s_N$, {\em decreasing} with respect to the natural partial order in $\{0,1\}^N$ (see e.g.\cite{DPRS}); in this way, the default of one firm {\em permanently} increases the rate of default of other firms. 
As observed in \cite{giesecke2011default}, the permanence of this effect is unrealistic, and should be subject to a sort of dissipation. They propose the following stochastic dynamics for the default rates:
\be[6]
\d\l_i(t) = -\a(\l_i(t) - \bar{\l}_i)\d t + \b \d L_N(t) + \s \sqrt{\l_i(t)} \d B_i(t) + \g \l_i(t) \d X(t),
\ee
where $ \bar{\l}_i \geq 0$ are given reference rates,  $L_N(t) := \frac{1}{N} \sum_i \s_i(t)$, the $B_i$'s are independent Brownian motions, and $X$ is some driving {\em exogenous factor}, such as a macroeconomic index. If $ \bar{\l}_i $ does not depend on $i$, this is again a permutation invariant model, subject to dissipation.

In terms of rigorous analysis, some results concerning the above models are available. 
In particular a macroscopic equation, which describes the limiting behavior in the limit $N \ra +\infty$, has been derived for a discrete-time version of \eqref{2}, \eqref{3}, together with sufficient conditions for this macroscopic equation to have a globally stable fixed point \cite{DelMoral}.     
For the system in \eqref{6}, a macroscopic equation has also been derived, together with a Large Deviation analysis of the $N \ra +\infty$ limit.

In comparison with what has been obtained for more traditional models motivated by statistical mechanics, one aspect that is missing is the detailed analysis of the {\em phase diagram} for some {\em special model}. 
This could reveal, for instance, what effects dissipation has in the long-time behavior of the system. 
The aim of this paper is to perform this analysis for a simple model that, however, exhibits some of the main features of the models above: the interaction is of mean-field type, and it is subject to dissipation.  
In absence of dissipation, this model reduces to (a slight modification of) the well known Curie-Weiss model, whose macroscopic properties are well understood for both the statics and the dynamics. In particular, for sufficiently low temperature, the system self-organizes, in the sense that spins tend to align producing a macroscopic magnetization that becomes constant in the long-time limit. 
We will see that in presence of dissipation this picture changes: self-organization is still present, but the macroscopic 
magnetization fluctuates periodically  driven by a two-dimensional ordinary differential equation similar to that of the classical Van der Pol oscillator, in which the large damping regime correspond to low dissipation in our model.
Despite of the simplicity of the model, some analytic aspects are considerably harder than in the standard Curie-Weiss model.

The emergence of {\em self sustained} periodic behavior (i.e. not induced by external periodic forces) has been recently studied in several models. In particular, the neural networks in \cite{perth1,perth2} and the {\em active rotators} in \cite{GB} appear to be close in spirit to the simpler model we propose. Precise analogies emerge, in particular, with the active rotators model. At macroscopic level, the ``inactive'' system has a one dimensional, compact stable manifold of  fixed points. Introducing a small forcing term, a slow, possibly periodic motion on the slightly deformed stable manifold is introduced. In our model, in absence of dissipation and with a suitable choice of variables, fixed points form a one dimensional, non-compact manifold, with an unstable part and two stable branches. A small dissipation introduces a slow motion of the stable part of the manifold; as the unstable part is met, the motion is driven to the other stable branch. This scheme is then repeated, producing periodic motion.

We finally mention that in \cite{lindner2004}, various models for noise-induced behavior are presented; in partucular, the model we propose has several analogies with the FitzHugh-Nagumo model \cite{fitzhugh1961,nagumo1962}.

In Section 2 we define the model, and derive its macroscopic evolution equation. In Section 3 we study the long-time behavior of the macroscopic equation in the case the evolution of the spin-flip rates is dissipative but not driven by additional noises. Numerical simulations for a more general case are given in Section 4.

\section{Microscopic and macroscopic evolutions}\label{sec:micromacro}

\subsection{The microscopic model}

Let $s = (s_1,s_2,\ldots,s_N) \in \{-1,1\}^N$ be a configuration of $N$ spins. 
We define, to begin with at an informal level, a stochastic process $(s(t))_{t \geq 0}$ by assigning (besides an initial condition) the {\em spin flip rates}. 
For $s \in \{-1,1\}^N$ and $i = 1,2,\ldots,N$, define $s^i$ to be the configuration obtained from $s$ by flipping $s_i$ (i.e. $s^i_i = - s_i$) and with all other spins left unchanged. 
At a given time $t \geq 0$, if $s(t) = s$, then each transition $s_i \ra -s_i$ occurs at rate $1+\tanh(s_i\l_i)$, where $\l_i(t)$ is itself a stochastic process, which evolves according to the stochastic differential equation
\be[7]
\d\l_i(t) = - \a \l_i(t)\d t + \s \d B_i(t) + \d \phi(s(t)),
\ee
where $\a, \s  \geq 0$, 
\be[mean_field]
\phi(s) := -\b m_N(s) := -\frac{\b}{N} \sum_{k=1}^N s_k,
\ee
with $\b \geq 0$, and $B_1,B_2,\ldots,B_N$ are independent Brownian motions. 
A different $\phi$ could be chosen to generate other models; we concentrate on this special choice. Note that between two consecutive spin flips, the $\l_i$'s evolve as independent Ornstein-Uhlenbeck processes; at a spin flip time $t$, each $\l_i$ jumps by a quantity of $\phi(s(t)) - \phi(s(t-))$. 

This construction can be made rigorous. In particular, it defines a
 {\em Markov process} $(s(t),\l(t)) \in \{-1,1\}^N \times \mbb{R}^N$ whose {\em infinitesimal generator} is given by
\begin{multline}\label{8}
\m{L}^N f(s,\l) := \sum_{i=1}^N \left[ (1+\tanh(s_i\l_i))\left(f(s^i,\l + \vec{1}\nabla_i \phi(s)) - f(s,\l) \right)\right.\\ \left. - \a \l_i \frac{\partial}{\partial \l_i}f(s,\l) + \frac{\s^2}{2} \frac{\partial^2}{\partial \l_i^2}f(s,\l)\right],
\end{multline}
where $\vec{1}$ is the $N$-dimensional vector with each component equal to 1 and $\nabla_i \phi(s) := \phi(s^i) - \phi(s)$.

For a clearer picture, consider the case in which $\a = \s = 0$. In this case, equation \eqref{7} is easily solved: $\l_i(t) = c - \b m_N(t)$, where $c = \l_0 + \b m_N(0)$. It follows that the process $s(t)$ is a Markov process with infinitesimal generator
\[
L^N f(s)  = \sum_{i=1}^N\left[ 1+ \tanh(s_i(c-\b m_N(s))) \right] [f(s^i) - f(s)].
\]
This is a modification of the standard stochastic dynamics of the Curie-Weiss model, where, in the usual version, $1+ \tanh(s_i(c-\b m_N(s)))$ is replaced by $\exp[s_i(c-\b m_N(s))]$, where $\b$ is the {\em inverse temperature} and $c$ is the {\em magnetic field}. As $\a >0$, in the dynamics of  $\l_i$ a dissipation, i.e. attraction to zero, is added to the jumps; as $\s>0$, a noise in this dynamics is introduced, in addition to the Poisson-type noise driving the spin-flips.

\begin{rem}
For the spin flip rates, that we set equal to $1+\tanh(s_i \l_i)$, other choices could be made, for instance $\exp[s_i \l_i]$, that would lead to similar results. The boundedness of our spin flip rates is convenient for the proofs, but we believe it is not an essential ingredient.
\end{rem}

%{\color{blue} 
%The intuitive picture of what is going on in a model described by this  mathematical structure is the following: you have a collection of spins $s_i$ which jump at a rate $1+\tanh(s_i\l_i)$: when $s_i$ and $\l_i$ have equal sign the probability of jumping is high, low when their sign is opposite. The $\l$'s evolve with a random noise, a dissipation ($\a>0$) which tends to turn them off (thus leaving the spins jumping with rate 1 independently of their value) and a term which depends on the mean value of all the spins which can be understood as follows: all the $\l$'s increase when you have a jump $+1\ra -1$ thus increasing the probability of jumps $+1\ra -1$ and decreasing that of jumps $-1\ra +1$: the jump $+1\ra -1$ causes a general tendency to ``go to $-1$''; analogously for the opposite situation, \emph{mutatis mutandis}: a jump $-1\ra +1$ causes a tendency to ``go to $+1$''. In other words, the mean-field interaction has the effect of a \emph{contagion} on the spins. Thus you can see the system as a 
%collection of spins struggling between going towards the totally disordered situation (zero average and rates of jump all equal to 1) and the contagion due to the mean-field interaction.
%}

\subsection{The macroscopic equation}

We now derive the dynamics of the system in the limit as $N \rightarrow +\infty$. In this section we proceed at a heuristic level; a formal proof will be sketched in the Appendix.

We introduce the following \emph{empirical measure}
\be[empirical_sum]
P^N:=\f1N\sum_{j=1}^N\delta_{(s_j,\l_j)}\ .
\ee
When $s = s(t)$, $\l = \l(t)$, we write $P^N_t$ for $P^N$.
For an arbitrary function $f:\ \{-1,1\}\times\mbb{R}\ra \mbb{R}$, empirical averages will be written in the form 
\[
 \bk{P^N_t,f} := \int f dP_t^N =\f1N\sum_j^N f(s_j(t),\l_j(t))\ .
\]
The infinitesimal generator \eqref{8} applied to a function $\Phi$ of the form
\be
\Phi(P^N):=\phi(\bk{P^N,f}) 
\ee
reads
\begin{multline}\label{generator}
\m{L}^N \phi(\bk{P^N,f}) = \sum_{j=1}^N \left[(1+\tanh(s_j\l_j))\left(\phi(\bk{P^{N,j},f})-\phi(\bk{P^N,f}) \right) \right.\\
\left.- \a \l_i \frac{\partial}{\partial \l_i}\phi(\bk{P^N,f}) + \frac{\s^2}{2} \frac{\partial^2}{\partial \l_i^2}\phi(\bk{P^N,f})\right]\ ,
\end{multline}
with
\[
\bk{P^{N,j},f}=\f1N\sum\bk{\delta_{(s_k^j,\l^j_k)},f}\ ,
\]
where
\be[jump]
s_k^j=s_k(1-\delta_{jk})-\delta_{jk}s_k\ ,\qquad \l_k^j=\l_k+\f{2\beta s_j}{N}\ 
\ee
are the spins and the $\l$'s after the jump of $s_j$.\\
If we expand the generator~\eqref{generator} for large $N$ we obtain
\begin{multline}
 \m{L}^N \phi(\bk{P^N,f})\approx \sum_{j=1}^N(1+\tanh(s_j\l_j))\phi'(\bk{P^N,f} )\\
\times\left(\f{2\b s_j}{N}\bk{P^N,\pp_\l f}+\f1N\bk{\de_{(-s_j,\l_j)}-\de_{(s_j,\l_j)},f}\right)\\
+\phi'(\bk{P^N,f})\left(-\a\bk{P^N,\l \pp_\l f}+\f{\s^2}{2}\bk{P^N,\pp_\l^2 f}\right)+O\left(\f1N\right)\ .
\end{multline}
This allows to identify the weak limit $P_t$ of the empirical measures $P^N_t$, evaluated along the paths of the process \eqref{8}, as the (deterministic) solution of the equation
\be[lim_dyn]
\bk{P_t,f}-\bk{P_0,f}=\int_0^t\bk{P_u,\m{L}(P_u)f}\d u\ ,
\ee
where
\begin{multline}\label{lim_gen}
\m{L}(P_t)f(s,\l)=\left((1+s\tanh(\l))(f(-s,\l)-f(s,\l))\right.\\
\left. +2\b\bk{P_t,s +\tanh(\l)}\pp_\l f(s,\l)-\a\l \pp_\l f(s,\l)+\f{\s^2}{2}\pp_\l^2 f(s,\l)\right)\ .
\end{multline}
We remark that the operator~\eqref{lim_gen} can be associated to the {\em nonlinear Markov Process} (see \cite{kolokoltsov})  $\left\{(\S_t,\L_t)\right\}_{t\geq 0}$, with $\S_t\in \{-1,+1\}$ and $\L_t\in\mathbb{R}$, solution of the stochastic equation
\be[proc]
\left\{
\begin{aligned}
&\S_t \rightarrow -\S_t\quad \text{with intensity}\quad 1+\tanh(\S_t\L_t)\ ,\\
&\d\L_t=\left(-\a\L_t+2\b\bk{P_t, s + \tanh(\l)}\right)\d t+\s\d B_t\ , \\
& P_t = \mbox{law}(\S_t,\L_t)\ .
\end{aligned}
\right.
\ee

Alternatively, writing formally $P_t = p_t(s,\l)\d\l$, $p_t$ can be identified as the weak solution of the nonlinear equation
\begin{multline}\label{FP}
\pp_t p_t(s,\l)=(1-s\tanh(\l))p_t(-s,\l)-(1+s\tanh(\l))p_t(s,\l)\\
+\f{\s^2}{2}\pp_\l^2 p_t(s,\l)-2\b g(t) \pp_\l p_t(s,\l)+\a\pp_\l (\l p_t(s,\l))\ ,
\end{multline}
where
\[
 g(t):=\sum_s\int_{-\infty}^{+\infty} p_t(s,\l)(s+\tanh(\l))\d\l\ .
\]
The regularizing effect of the second derivative guarantees that $p_t$ is indeed smooth in $\l$.

Notice that defining
\be[numu]
\nu_t(\l):=\sum_{s=\pm 1} p_t(s,\l)\ ,\qquad \mu_t(\l):=\sum_{s=\pm 1} sp_t(s,\l)\ ,
\ee
-- and so $p(+1,\l)=\f{\mu(\l)+\nu(\l)}{2}$, $p(-1,\l)=\f{-\mu(\l)+\nu(\l)}{2}$ --
you get from the Fokker-Planck equation~\eqref{FP}, the equivalent following system of PDEs
\be[FP2]
\left\{\begin{aligned}
 \pp_t\nu_t(\l)=&\f{\s^2}{2}\pp_\l^2 \nu_t(\l)+(\a\l-2\b g(t))\pp_\l \nu_t(\l)+\a\nu_t(\l)\ ,\\
\pp_t\mu_t(\l)=&\f{\s^2}{2}\pp_\l^2\mu_t(\l)+(\a\l-2\b g(t))\pp_\l \mu_t(\l)+(\a-2)\mu_t(\l)\\
&-2\tanh(\l)\nu_t(\l)\ ,
\end{aligned}\right.
\ee
where 
\be[g]
 g(t)=\bk{\nu_t,\tanh(\l)}+\bk{\mu_t,1}=\bk{\nu_t,\tanh(\l)}+m(t)\ ,
\ee
with $m(t)$ the mass of $\mu$ at time $t$ or, equivalently, the expected value of $\S_t$ (the `average spin' or magnetization).
Clearly, the knowledge of the pair $(\nu_t,\mu_t)$ of measure-valued processes, is equivalent to that of  $P_t$. By definition, $\nu_t(\l)$ is the density of the marginal distribution $\L_t$, while $\mu_t$ is the density of a signed measure; they must satisfy $\int_\mathbb{R}\nu_t(\l)\d\l=1$, $\int_\mathbb{R}\mu_t(\l)\d\l=m(t)\in [-1,1]$.

All the above can be translated into the following rigorous statement, whose proof is given in the Appendix. In Theorem~\ref{th:limit} below, we suppose that the laws of $(\sigma^{N}(0),\lambda^{N}(0))$ are $P_{0}$-chaotic. Recall the definition of chaoticity. Let $\theta$ be a probability measure on a Polish space $\mathcal{X}$ and, for $N\in \mathbb{N}$, let $\Theta_{N}$ be a symmetric probability measure on the product space $\mathcal{X}^{N}$ (the law of $(\sigma^{N}(0),\lambda^{N}(0))$ is a probability measure on $(\{-1,1\} \times \mathbb{R})^{N}$, assumed to be symmetric). Then $(\Theta_{N})_{N\in\mathbb{N}}$ is said to be $\theta$-chaotic if for every $n\in \mathbb{N}$ the joint law of the first $n$ marginals of $\Theta_{N}$ converges weakly to the product measure $\otimes^{n}\theta$.

\begin{theorem} \label{th:limit}
Assume that the initial conditions $(\s(0),\l(0)) = (\s^N(0),\l^N(0))$ for the processes \eqref{8} are $P_0$-chaotic for some probability measure $P_0$ on $\{-1,1\} \times \mathbb{R}$. Then the sequence of measure-valued random variables $(P^N)_{N\in \mathbb{N}}$ converges in distribution as $N \rightarrow +\infty$, in the topology of weak convergence of probability measures, to the law $P$ on path space of the unique solution of equation \eqref{proc} with initial distribution $P_{0}$; moreover, $(P_{t})_{t\geq 0}$, the measure-valued process of time marginals of $P$, solves the integral equation \eqref{lim_dyn}.
\end{theorem}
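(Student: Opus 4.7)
The plan is to follow the classical three-step martingale approach for mean field limits: (i) tightness of the random measures $(P^N)_{N\in\mbb{N}}$ in a suitable Skorokhod space; (ii) identification of any limit point as a solution of \eqref{lim_dyn}; (iii) uniqueness of that solution. Viewing $P^N$ as the random empirical flow $t\mapsto P^N_t$, the natural target space is $D([0,T],\m{P}(\{-1,1\}\times\mbb{R}))$ with the Skorokhod topology built from weak convergence of measures. Once convergence at the level of time-marginal empirical measures is obtained, the path-space statement of Theorem~\ref{th:limit} follows from the general equivalence between convergence of empirical path measures and propagation of chaos for exchangeable systems.

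For tightness the crucial input is a uniform moment estimate on the $\l_i$'s. Between jumps each $\l_i$ is an Ornstein--Uhlenbeck process, and at every spin flip in the system it is shifted by the bounded deterministic quantity $\pm 2\b/N$; since there are $O(N)$ flips per unit time, the cumulative jump contribution stays uniformly bounded in $N$, and a direct Gronwall argument yields $\sup_N \Mean\bigl[\sup_{t\leq T}|\l_i(t)|^p\bigr]<\infty$ for every $p\geq 1$. This provides compact containment for the time marginals of $P^N$. Combined with an Aldous-type oscillation estimate on the scalar processes $\bk{P^N_t,f}$ for $f\in C^2_b$, obtained directly from the generator expression \eqref{generator} (whose three contributions are all $O(1)$ since rates and test function are bounded), this yields tightness.

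For the identification step, fix $\phi\in C^2_b(\mbb{R})$ and $f\in C^2_b(\{-1,1\}\times\mbb{R})$ and set $\P(\pi):=\phi(\bk{\pi,f})$. The process
\[
M^{N,\P}_t := \P(P^N_t)-\P(P^N_0)-\int_0^t \m{L}^N\P(P^N_u)\d u
\]
is a martingale whose quadratic variation is $O(1/N)$, because each spin flip changes $\bk{P^N,f}$ by $O(1/N)$ and the flip rates are uniformly bounded. The heuristic expansion in Section~\ref{sec:micromacro} can be made rigorous and gives
\[
\m{L}^N\P(P^N) = \phi'(\bk{P^N,f})\bk{P^N,\m{L}(P^N)f}+O(1/N),
\]
where $\m{L}(P^N)f$ is uniformly bounded in $(s,\l)$ and in $P^N$ because $|\b\bk{P^N,s+\tanh(\l)}|\leq 2\b$. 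The map $\pi\mapsto \bk{\pi,\m{L}(\pi)f}$ is continuous in the weak topology, being affine and continuous in the bounded-Lipschitz functional $\bk{\pi,s+\tanh(\l)}$. Passing to the limit along a convergent subsequence, any limit point $(P_t)$ satisfies \eqref{lim_dyn}, with $P_0$ the deterministic initial condition fixed by the chaoticity assumption.

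The remaining step, and the main obstacle, is uniqueness of solutions to \eqref{lim_dyn}, or equivalently of the nonlinear Markov process \eqref{proc}. I would proceed by synchronous coupling: given two candidate flows $(P^{(i)}_t)_{i=1,2}$ with $P^{(1)}_0=P^{(2)}_0=P_0$, realize the associated nonlinear processes $(\S^{(i)},\L^{(i)})$ on a common probability space, driven by the \emph{same} Brownian motion and the \emph{same} Poisson point process governing the flips, with the flip of $\S^{(i)}$ accepted whenever the Poisson mark falls below $1+\tanh(\S^{(i)}\L^{(i)})$. Setting $h^{(i)}(t):=2\b\bk{P^{(i)}_t,s+\tanh(\l)}$, the map $P\mapsto 2\b\bk{P,s+\tanh(\l)}$ is bounded and Lipschitz with respect to the bounded-Lipschitz distance $d_{BL}$. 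An It\^o--Gronwall estimate on $\Mean[(\L^{(1)}_t-\L^{(2)}_t)^2]$ together with an estimate on $\Prb(\S^{(1)}_t\neq\S^{(2)}_t)$ through the Lipschitz dependence of the flip rates yields a closed Gronwall inequality of the form $d_{BL}(P^{(1)}_t,P^{(2)}_t)^2\leq C\int_0^t d_{BL}(P^{(1)}_u,P^{(2)}_u)^2\d u$, forcing $P^{(1)}=P^{(2)}$. The real difficulty is the two-way interaction between $\S$ and $\L$: a discrepancy in $\S$ at time $u$ alters the drift of $\L$ after $u$, and conversely a discrepancy in $\L$ alters the rate at which $\S$ flips. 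Boundedness of all coefficients and of the flip rates is what ultimately closes the loop.
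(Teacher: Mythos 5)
Your overall martingale-problem program (tightness, identification, uniqueness) is a legitimate alternative to the paper's strategy, and the individual ingredients you sketch (uniform moment bounds on the $\l_i$, $O(1/N)$ quadratic variation of $M^{N,\P}$, continuity of $\pi\mapsto\langle\pi,\m{L}(\pi)f\rangle$, synchronous coupling with bounded Lipschitz coefficients) are all workable. The genuine gap is in your first paragraph: the theorem asserts convergence of $P^N=\frac1N\sum_j\delta_{(s_j,\l_j)}$ as random elements of $\mathcal{P}(D)$, with $D=D([0,\infty),\{-1,1\}\times\mathbb{R})$, to the law $P$ of the nonlinear process \emph{on path space}, whereas your steps only control the flow of time marginals $t\mapsto P^N_t$ in $D([0,T],\mathcal{P}(\{-1,1\}\times\mathbb{R}))$. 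The Tanaka--Sznitman equivalence you invoke relates chaoticity of the $N$-particle laws on $D^N$ to convergence of the empirical measures on the \emph{same} space $D$; it does not upgrade marginal-flow convergence to path-space convergence. Convergence of $P^N_t$ for every $t$ identifies only single-time empirical statistics and says nothing about multi-time quantities such as $\frac1N\sum_j\delta_{(s_j(t_1),s_j(t_2))}$, which are part of the data of $P$ and are not determined by the flow $(P_t)$. To repair this within your framework you would have to run tightness and identification directly for the $\mathcal{P}(D)$-valued variables $P^N$, characterize limit points as solutions of the nonlinear martingale problem associated with \eqref{proc} on path space, and prove uniqueness of that nonlinear problem; your synchronous coupling can serve there, but it also needs the additional (standard, not free) step of associating to an arbitrary solution of \eqref{lim_dyn} a process realization, i.e.\ a linearization argument in which one freezes $g(t)=2\b\langle P_t,s+\tanh(\l)\rangle$ and uses uniqueness for the resulting linear, time-inhomogeneous martingale problem.

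For comparison, the paper sidesteps compactness altogether: it writes both the $N$-particle system and the McKean--Vlasov limit as strong solutions of It\^o--Skorohod equations driven by Brownian motions and Poisson random measures, quotes Graham (1992) for well-posedness and for propagation of chaos of an auxiliary system with averaged drift, and then couples the true system to the auxiliary one pathwise (same noises, same initial data) with a Gronwall bound on $\sup_{t\le T}$ differences; that coupling is precisely what yields chaoticity on path space and hence convergence in $\mathcal{P}(D)$. Your coupling idea is the right tool, but in your plan it is deployed only for uniqueness of \eqref{lim_dyn}, which is not where the path-space content of the theorem comes from.
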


\section{The case without noise}

In this section we analyze equation \eqref{lim_dyn} in the special case of $\s = 0$. We also assume that the initial condition is such that all the spins have the same $\l$, i.e. we take $\l_j(0)=\l_0$ for all $j$'s. These simplifications allow a detailed analysis of the long-time behavior of the solution of \eqref{lim_dyn}.
Indeed, writing $P_0$ in the form
\be
P_0(s,\d\l)=\de (\l-\l_0)\times d_0(s)\ ,
\ee
where $d_0$ is a probability on $\{-1,1\}$, the corresponding solution of \eqref{lim_dyn} maintains the same form:
\be[ans]
P_t(s,\d\l)=\de(\l-\l(t))\times d_t(s)\ ,
\ee
Plugging \eqref{ans} into \eqref{lim_dyn}, and setting $m(t) := d_t(1) - d_t(-1) = \bk{d_t,s}$, 
we have that the measure \eqref{ans} is indeed a solution of the limiting dynamics \eqref{lim_dyn}, provided that
\be[dyn0]
\left\{\begin{aligned}
 \dot\l(t) &=2\b(m(t)+\tanh(\l(t)))-\a\l(t)\ ,\\
\dot m(t)&=-2(m(t)+\tanh(\l(t)))\ ;
\end{aligned}\right.
\ee
with
$\l(0)=\l_0$ and $m(0)=\bk{d_0,s}$.\\
Notice that, in terms of the pair $\mu_t,\nu_t$ introduced in \eqref{numu}, the solution \eqref{ans} reads
\be
\nu_t(\l)=\de(\l-\l(t))\ ,\quad \mu_t(\l)=m(t)\de(\l-\l(t))\ .
\ee

\subsection*{Analysis of the attractors}
First of all, we note that the only fixed point in \eqref{dyn0} is the origin $(0,0)$ of the phase plane $m,\l$. The linearization around this point gives
\be
\begin{pmatrix}
 \dot m\\
\dot\l
\end{pmatrix}
=
\begin{pmatrix}
 -2 & -2\\
  2\b & 2\b-\a
\end{pmatrix}
\begin{pmatrix}
 m\\
\l
\end{pmatrix}
\ee
and the eigenvalues of the system are
\be
x_\pm=\b-1-\f{\a}{2}\pm\sqrt{\left(\b-1-\f{\a}{2}\right)^2-2\a}\ .
\ee
These eigenvalues have both negative real part for $\b<\f{\a}{2}+1$, and both positive real part for $\b>\f{\a}{2}+1$. Thus, for $\b >\f{\a}{2}+1$, the local stability of the origin is lost.
Much more than local stability can be obtained for system \eqref{dyn0}.

\begin{theorem} \label{th:cycle}
\begin{itemize}
\item[(i)]
For $\b\leq\f{\a}{2}+1$ the origin is a global attractor for \eqref{dyn0}.
\item[(ii)] 
For $\b>\f{\a}{2}+1$ the system \eqref{dyn0} has a unique periodic orbit, which attracts all trajectories except the fixed point.
\end{itemize}
\end{theorem}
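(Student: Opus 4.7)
The key idea is a linear change of variables that recasts \eqref{dyn0} as a Liénard system, so that both assertions follow from classical planar dynamics. Assuming $\a,\b>0$ (the natural setting for dissipation; $\a=0$ or $\b=0$ are degenerate), set $x:=\l$ and $y:=\b m+\l$. A direct substitution into \eqref{dyn0} gives
\begin{equation*}
\dot x = 2y-F(x),\qquad \dot y = -\a x,\qquad F(x):=(2+\a)x-2\b\tanh(x).
\end{equation*}
The map $(m,\l)\mapsto(x,y)$ is a linear bijection, so equilibria, periodic orbits, and their basins of attraction correspond under it.

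For (i), I would use the Lyapunov function $V(x,y):=\a x^2+2y^2$. A short computation yields $\dot V=-2\a x F(x)$. Since $F'(x)=(2+\a)-2\b\operatorname{sech}^2(x)$, the hypothesis $\b\le\a/2+1$ gives $F'(x)\ge 0$ on $\mathbb{R}$, strictly for $x\neq 0$ (in the boundary case $\b=1+\a/2$ one finds $F'(x)=(2+\a)\tanh^2(x)$). Hence $F$ is strictly increasing with $F(0)=0$, so $xF(x)>0$ for $x\neq 0$, and $\dot V\le 0$ with equality only on $\{x=0\}$. Coercivity of $V$ makes trajectories forward-bounded, and the largest invariant set inside $\{x=0\}$ reduces to the origin (if $x\equiv 0$ the first equation forces $y\equiv 0$). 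LaSalle's invariance principle then yields global asymptotic stability of the origin.

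For (ii), I would invoke the classical Liénard theorem for the system above (after the inessential rescaling $y\mapsto 2y$ that puts it in standard form $\dot x = y - F(x),\ \dot y=-g(x)$ with $g(x):=2\a x$). Under the hypothesis $\b>1+\a/2$, the following properties of $F$ must be verified: $F$ is odd and $C^1$, with $F'(0)<0$; the equation $F'(x)=0$ has on $(0,\infty)$ a unique solution $x_0$ defined by $\operatorname{sech}^2(x_0)=(2+\a)/(2\b)$, which is a local minimum; the ratio $F(x)/x=(2+\a)-2\b\tanh(x)/x$ is strictly increasing on $(0,\infty)$ from $F'(0)<0$ up to $2+\a$, hence $F$ admits a unique positive zero $a$, and necessarily $a>x_0$; consequently $F<0$ on $(0,a)$ and $F>0$, strictly increasing on $(a,\infty)$, with $F(x)\to+\infty$. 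Together with $xg(x)>0$ for $x\neq 0$, these are precisely the hypotheses of Liénard's theorem, producing a unique stable limit cycle. To upgrade this to global attraction, I would argue by Poincaré--Bendixson: the identity $\dot V=-2\a xF(x)$ shows $\dot V<0$ outside the strip $|x|\le a$, so every trajectory is forward-bounded; the origin is the only equilibrium and, by the linearization recorded just above the statement, is a source; hence no non-equilibrium trajectory can have the origin in its $\omega$-limit, and Poincaré--Bendixson then forces that $\omega$-limit to coincide with the unique limit cycle.

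The main technical obstacle is the shape analysis of $F$ demanded by Liénard's theorem---exactly one local minimum on $\mathbb{R}_+$, a single positive zero, and strict monotonicity past it---combined with establishing forward-boundedness and the repulsive character of the origin that are needed for Poincaré--Bendixson to promote the uniqueness of the limit cycle to the global basin statement.
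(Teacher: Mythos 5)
Your proposal follows essentially the same route as the paper: your change of variables $(x,y)=(\lambda,\lambda+\beta m)$ is, up to a factor $2$, exactly the paper's reduction to the Liénard system $\dot y=-2\alpha\lambda$, $\dot\lambda=y-g(\lambda)$ with $g(\lambda)=(2+\alpha)\lambda-2\beta\tanh\lambda$; your Lyapunov function is a constant multiple of the paper's $W=\lambda^{2}/2+y^{2}/(4\alpha)$, and your part (i), including the LaSalle step that the paper leaves implicit, is correct (the case $\beta=0$, where your change of variables degenerates, is trivial and can be handled directly). For part (ii) the paper appeals to Theorem~1.1 of Carletti--Villari, which directly yields a unique \emph{globally} attracting periodic orbit, and in addition sketches a self-contained proof by an energy-balance estimate $\Delta W(y_0)$ over half-revolutions; you instead verify the hypotheses of the classical Liénard theorem (your monotonicity argument for $F(x)/x$, the unique positive zero $a$, and $a>x_0$ are all correct) and then try to upgrade orbital stability to global attraction via Poincaré--Bendixson.

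The one genuine soft spot is the boundedness step in that upgrade: from $\dot V=-2\alpha x F(x)$ you conclude that $\dot V<0$ outside the strip $|x|\le a$ and hence that every trajectory is forward-bounded. That implication is not valid as stated, because $\dot V>0$ inside the strip (where $xF(x)<0$), and a priori the gain in $V$ accumulated on each crossing of the strip could outweigh the loss outside it. Ruling this out requires the standard Liénard estimate that the gain per crossing is small for large amplitude (the crossing time is $O(1/|y|)$), i.e. precisely the kind of computation the paper carries out when showing $\Delta W(y_0)\to-\infty$ as $y_0\to+\infty$; alternatively you could cite a formulation of the Liénard/Levinson--Smith theorem (or the Carletti--Villari result used in the paper) that already asserts attraction of all non-stationary orbits, making the Poincaré--Bendixson paragraph unnecessary. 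With that single estimate or citation supplied, your argument is complete and coincides in substance with the paper's proof.
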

\begin{proof}
It is useful to perform a simple change of variable, consisting in replacing $m$ by $y:=2(\l+\b m)$. In the variables $(y,\l)$, system \eqref{dyn0} becomes
\be[lien]
\begin{aligned}
\dot y&=-2\a\l ,\\
\dot\l&=y-g(\l)\ ,
\end{aligned}
\ee
with $g(\l)=(2+\a)\l-2\b\tanh(\l)$. The system \eqref{lien} is of the {\em Liénard} type (see, for example, \cite{Carletti-Villari,Sabatini-Villari}), which allows a detailed study of the global stability. 

\noindent
{\em Case $\b\leq\f{\a}{2}+1$}. In this case it is easy to show that the function $g$ is strictly increasing, and it is odd. Setting
\be[W]
W(\l,y):=\f{\l^2}{2}+\f{y^2}{4\a}\ ,
\ee
we have	
\be[dotW]
\dot W=-\l g(\l)\ ,
\ee
where
\[
\dot W(\l,y) = \frac{d}{dt} W(\l(t), y(t))\Big|_{t=0}\ ,
\]
and $(\l(t), y(t))$ solves \eqref{lien} with $(\l(0),y(0)) = (\l,y)$.
Thus $W$ is a global {\em Lyapunov function}, which implies global stability of the origin.

\noindent
{\em Case $\b>\f{\a}{2}+1$}. The odd function $g$ has now two additional, symmetric zeros $\pm \l^*$, $\l^*>0$ (see figure~\ref{fig3}). Thus, the system \eqref{dyn0} satisfies the condition of Theorem 1.1 in \cite{Carletti-Villari}, which establishes existence and uniqueness of a globally stable periodic orbit.\\
\begin{figure}[htbp]
\begin{center}
\includegraphics{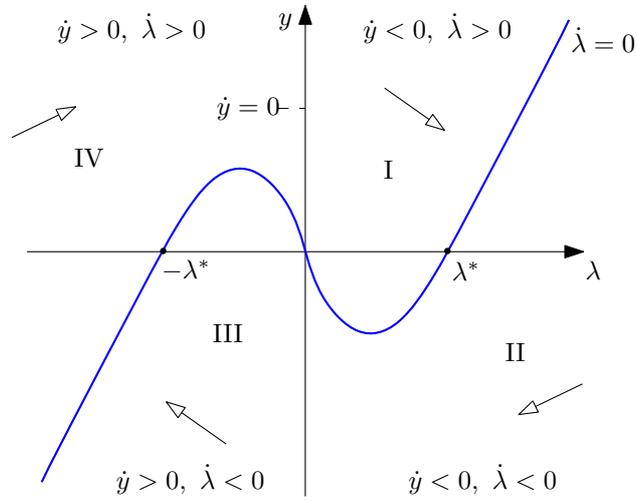}
\caption{Qualitative behavior of the dynamical system \eqref{lien}. The four regions indicate the four different directions that the vector field assumes. The blue curve is the graph of $g$.}
\label{fig3}
\end{center}
\end{figure}

Here we briefly sketch a proof of the existence, uniqueness and stability of the limit cycle.
First of all, let us prove that all the trajectories revolve around the origin. Suppose to start in region I (see figure~\ref{fig3}): $y$ is decreasing, $\l$ increasing, so you will eventually hit the nullcline $y=g$ (the origin is the only fixed point, and it is repelling). So we are in II: both $y$ and $\l$ are decreasing, so either you go in III or you `die' at infinity. Let us examine the case $y\rightarrow -\infty$; for the slope of the trajectory in this region we have
\be
\f{\d y}{\d \l}=\f{\dot y}{\dot\l}=\f{-2\a \l}{y-g(\l)}\xrightarrow[y\rightarrow -\infty]{}0
\ee  
which means that we will always end up in III. III and IV behave the same, by symmetry.

\noindent Denoting with $y_0$ ($y_1$) the intersection of the orbit with the positive (negative) $y$-axis (which we have proved to exist for every trajectory), the zeros of the function
\be
\Delta W(y_0):=W(0,y_1)-W(0,y_0)=\f{y^2_1-y_0^2}{4\a}\ ,
\ee
correspond to periodic orbits (we make reference to figure~\eqref{fig4} for notations).\\
\begin{figure}[htbp]
\begin{center}
\includegraphics{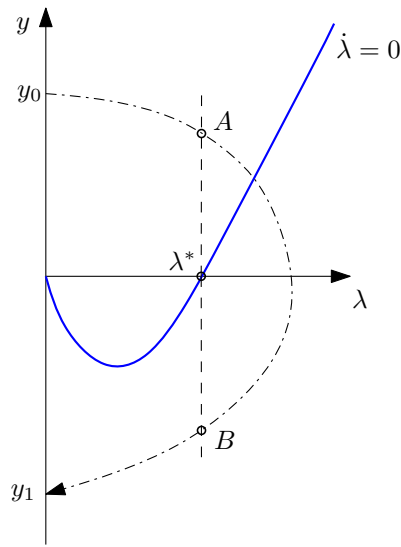}
\caption{Plot of the phase plane of the system \eqref{lien}. One (half) trajectory is sketched (dotted curve), starting at $(0,y_0)$  and then back to the $y$-axis at $(0,y_1)$. $A$ ($B$) denotes the point in which the $\lambda$-component of such trajectory becomes greater (lower) than $\l^*$.}
\label{fig4}
\end{center}
\end{figure}

Name $y^*_0$ the positive $y$-axis intersection of the orbit that passes through $(\l^*,0)$; then, calling $t_1$ the time in which $(0,y_1)$ is reached, you have
\be
\Delta W(y_0^*)=\int_0^{t_1}\dot W(t)\d t=-\int_0^{t_1}\l(t)g(\l(t))\d t\ >0\ ,
\ee
 since $g(\l)>0$ for $\l \in (0,\l^*)$. For $y_0<y_0^*$ you get an $x$-axis intersection smaller than $\l^*$ --for the uniqueness of trajectories. Then you still have $\Delta W>0$.\\
Now take $y_0>y_0^*$. In this case it is convenient to split $\Delta W$ as follows
\be[split]
\Delta W(y_0)=\underset{(a)}{-\int_0^{t_B} \l(t)g(\l(t))\d t}\underset{(b)}{-\int_{t_A}^{t_B} \l(t)g(\l(t))\d t}\underset{(c)}{-\int_{t_B}^{t_1} \l(t)g(\l(t))\d t}\ .
\ee
The first term is clearly positive, and with a change of variable it can be rewritten as
\be
(a)=\int_0^{\l^*}\f{\l\d\l}{1-y(\l)/g(\l)}\ .
\ee
We claim that 
\be[claim]
(a)\searrow 0\quad \text{monotonically as}\quad y_0\ra +\infty\ .
\ee
In order to prove this claim, first of all notice that if one starts at $(0,\wt{y}_0=y_0+\epsilon)$ for $\epsilon$ positive and arbitrarily small, then $\wt{y}(\l)>y(\l)$ for every $\l$ between zero and the $\l$-axis intersection with the trajectory. This  implies that $(a)$ decreases monotonically with $y_0$. Moreover, the slope of the trajectories
\[
\f{\d y}{\d\l}=\f{2\a\l}{g(\l)-y}
\]
 is bounded as long as we are away from the nullcline $y=g(\l)$ and on a compact interval in $\l$. So, given an arbitrary positive number $M$ you can always find an initial condition $(0,y_0)$  and a $\wt{\l}\geq \l^*$ such that $y(\l)>M$  as long as $\l\in (0,\wt{\l})$. This proves the claim.\\
Analogously, one can prove that $(c)$ is positive and monotonically decreasing to zero as $y_0\ra +\infty$.

\noindent Now let us deal with the second term in \eqref{split}. It can be rewritten as 
\be[(b)]
(b)=-\int_{y(t_{B})}^{y(t_A)}g(\l(y))\d y\ ,
\ee
which is negative. We claim that $(b)\searrow -\infty$ monotonically as $y_0\ra +\infty$. In order to show this, it is convenient to split $(b)$  as follows
\be[bsplit]
(b)=\int_{t_A}^{t_A+\de}\dot W\d t+\int_{t_A+\de}^{t_{B}-\de}\dot W\d t+\int_{t_{B}-\de}^{t_{B}}\dot W\d t\ ,
\ee
with $\de$ arbitrarily small (and positive). The first and third terms in the right hand side of \eqref{bsplit} remain negative and finite in the limit $y_0\ra +\infty$. For the second one, namely 
\be
-\int^{y(t_{A}+\de)}_{y(t_{B}-\de)}g(\l(y))\d y\ ,
\ee
the integrand is bounded away from zero (which is not true for \eqref{(b)}) and $y(t_A+\de)\nearrow +\infty$ monotonically when $y_0\ra +\infty$, which comes as a consequence of the proof of \eqref{claim}. This is enough to prove the claim.

\noindent In the end, we have that
\begin{itemize}
\item $\Delta W(y_0)$ is positive when $y_0\leq y_0^*$
\item $\Delta W(y_0)\searrow -\infty$ monotonically as $y_0\ra +\infty$
\end{itemize}
which prove the existence and uniqueness of the periodic orbit. In order to prove  stability, it is enough to say that $\Delta W>0$ when $y_0<y_0^p$ and $\Delta W<0$ when  $y_0>y_0^p$, where $y_0^p$ denotes the positive $y$-axis intersection of the periodic orbit.
\end{proof}

Summarizing, the simple model presented in this section is obtained from a standard Curie-Weiss-type model by introducing a dissipation on the spin-flip intensity. This dissipation does not destroy self-organization of the spins. However, the nonzero magnetization produced by the self-organization does not converge to a constant value, as in Glauber dynamics for ferromagnets, but rather oscillates periodically.

\section{Numerical results and conclusions}

Here we would like to present and briefly discuss some numerical results regarding the system of partial differential equations~\eqref{FP2}, that is the general case with the extra noise in the dynamics of the intensities $\l_j$'s~\eqref{7}, whose qualitative analysis is intended to be subject of future work.
The qualitative idea that we would like to stress (supported by the numerical tests) is the following: for small enough values of the diffusion parameter $\s$ the system qualitatively behaves just as in the no-diffusion case we have analyzed in the previous section, i.e. the extra noise gives just small perturbations around the periodic orbit (for the super-critical case $\b>\a/2 +1$) or around the totally disordered configuration (when $\b<\a/2 +1$); for $\s$ large enough, instead, the diffusion term dominates and  both the super-critical and sub-critical cases evolve to gaussian behaviors around the totally disordered configuration $(m,\l)=0$.

Figure~\ref{fig5} refers to the sub-critical case, with parameters $\a=3$, $\b=1$. 
There it is shown a comparison between two phase plots: one is the phase plot of $m = \bk{\mu,1}$ versus the expected value of $\l$ 
\[
 \bk{\l}_t:=\int_\mbb{R}x\nu_t(\d x)\ ,
\]
and with $\s = 0.1$,
the other one is the phase plot of the corresponding $\s=0$ case, i.e. it is calculated by solving system~\eqref{dyn0}. Here it is clear that the trigger of a small (but non-zero) value of extra noise does not spoil the qualitative behavior of the system.
Moreover, from equation \eqref{proc}, it is easy to see that the variance of $\nu_t(\l)$
\[
\bk{(\l - \bk{\l}_t)^2}_t = \Var(\L_t)
\]
satisfies the equation
\[
\frac{\d}{\d t}\Var(\L_t) = - 2 \a \Var(\L_t) + \s^2,
\]
so that
\be[varnu]
\Var(\L_t) = e^{-2 \a t} \Var(\L_0) + \frac{\s^2}{2\a}\left(1-e^{-2\a t}\right).
\ee
In all simulations the initial condition for $\nu$ is (approximately) a delta function at $\l_0=3$, so $\Var(\L_0) \simeq 0$. For large $t$, the variance approaches the value $\frac{\s^2}{2\a}$. 

%In figure~\ref{fig6} we see the plot of the variance of $\nu_t(\l)$, with the same parameters as for figure~\ref{fig5}. It can be seen that, as expected, the variance remains bounded and goes to small values as time grows: the evolution starts from a delta function centered around some initial value (here $\l_0=3$), and evolves towards another peaked function, centered around $\l=0$: the system has evolved to a single final value of the intensity, which is equal to zero, and that's exactly what happens in the no-diffusion case.\\

An analogous situation is found in the super-critical case, as can be seen in figure~\ref{fig7}
% and \ref{fig8}. 
One can see that the mean of the densities keeps oscillating in time and  that the periodic behavior is indeed preserved, even if slightly modified, in the small noise case. By \eqref{varnu}, the variance of $\nu$ remains bounded and actually quite small during the evolution, thus showing that turning on a small noise does not spoil the qualitative behavior of the noiseless system.

Figures~\ref{fig9} and %\ref{fig10}, 
\ref{fig11}, 
%\ref{fig12} 
are the analogues of the preceding ones, but when $\s=10$, i.e. with a large contribution of the extra-noise in the dynamics of the intensities~\eqref{7}: it is clear that the diffusion is here predominant, pushing the variance up to a gaussian behavior around the totally disordered configuration $(m,\l)=(0,0)$. The more interesting result is perhaps the one shown in figure~\ref{fig11}: the periodic behavior of the no-diffusion case is completely lost and the system rapidly evolves to the origin of the phase plane.

\begin{figure}[p]
\begin{minipage}{.44\textwidth}
 \includegraphics[width=\textwidth]{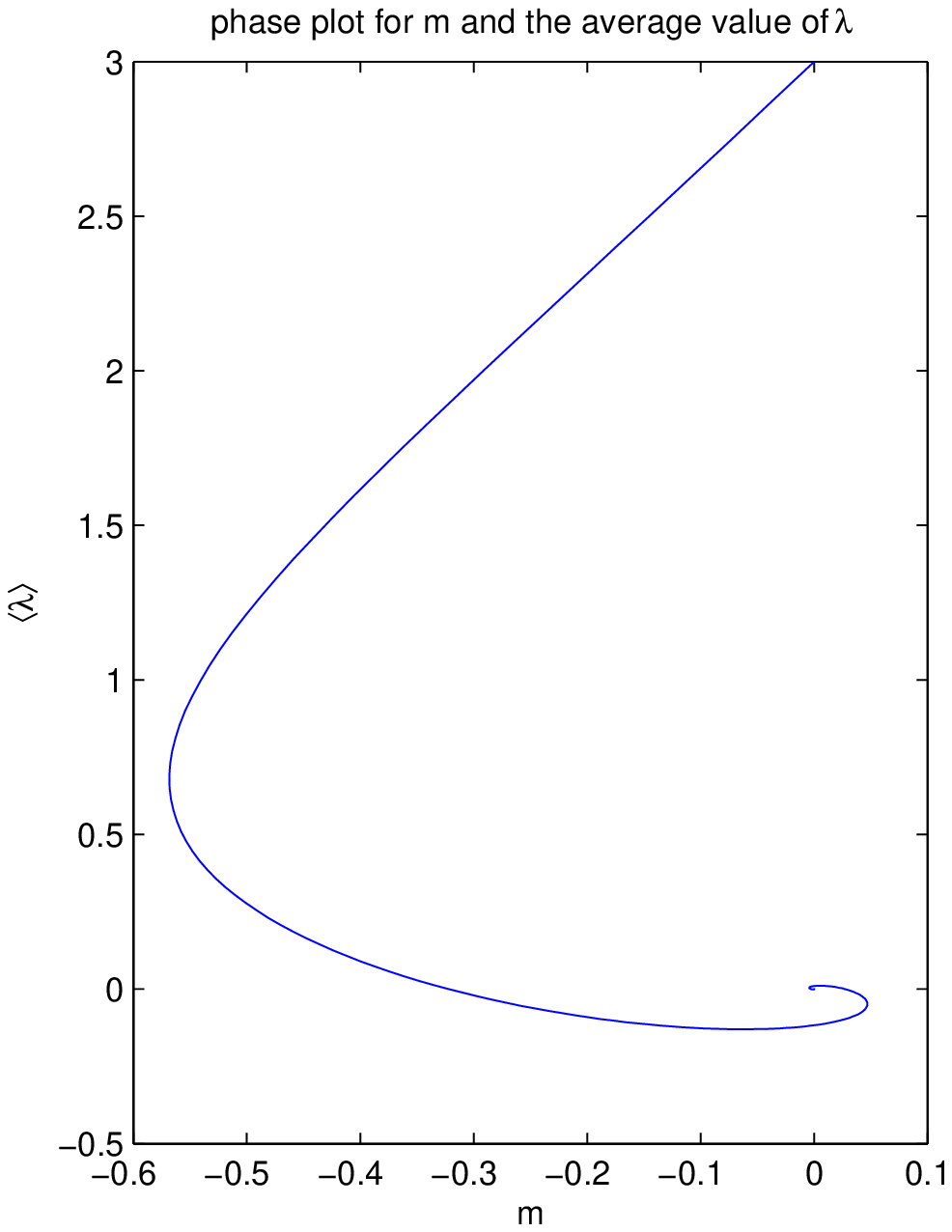}
\end{minipage}%
\begin{minipage}{.44\textwidth}
 \includegraphics[width=\textwidth]{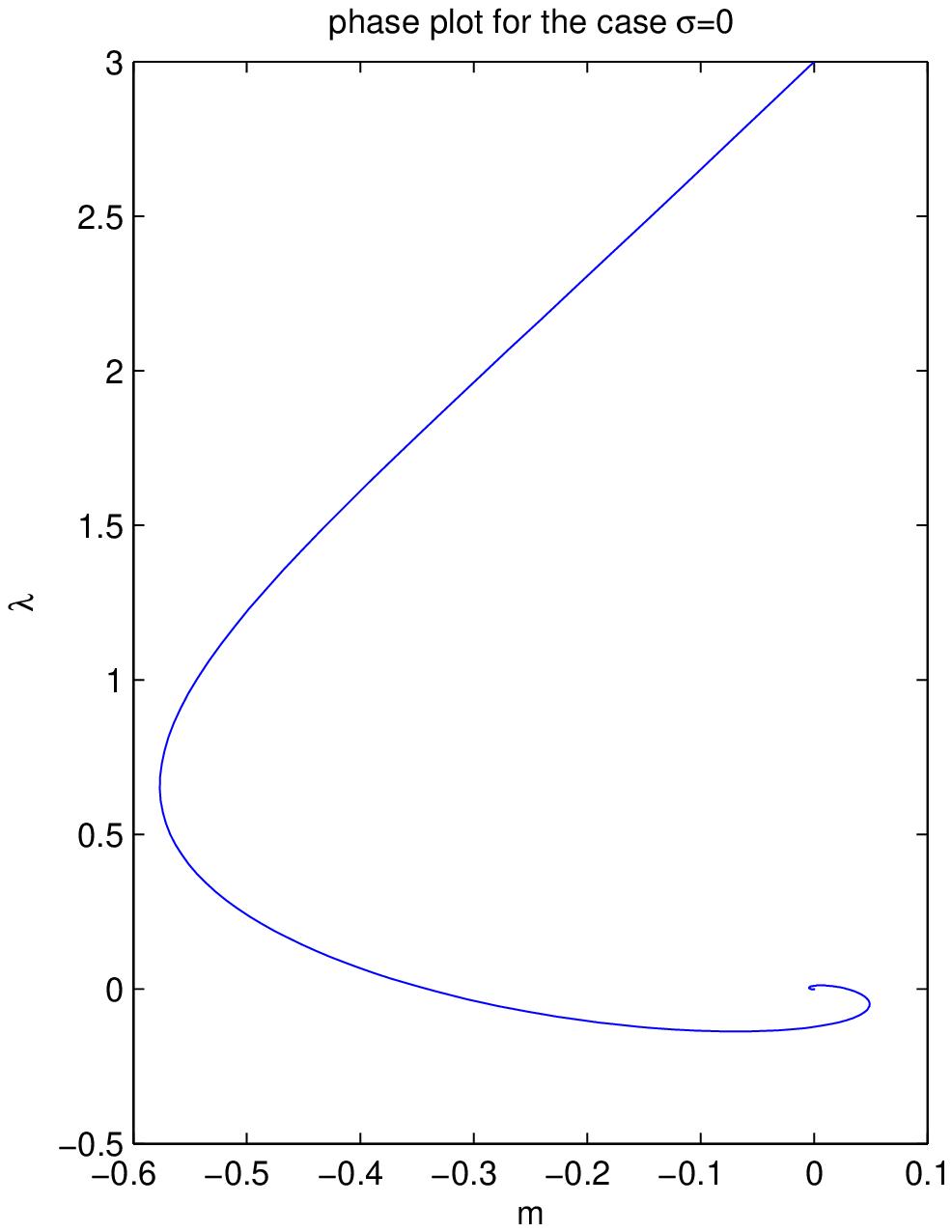}
\end{minipage}%
\caption{Phase plot of the expected value of $\l$ versus $m$ (left) and phase plot of the no-diffusion case (namely of the dynamical system \eqref{dyn0}) (right). The parameters are   $\a=3$, $\b=1$, $\s=0.1$, $\l_0=3$, $m(0)=0$.}  \label{fig5}
\end{figure}

%\begin{figure}
% \begin{center}
%  \includegraphics[width=.45\textwidth]{varl_0-1_3_1.pdf}
%\caption{Plot of the variance of $\nu_t(\l)$ versus time. The variance remains bounded and evolves towards tiny values.The parameters are the same as for figure~\ref{fig5}.}\label{fig6} 
%\end{center}
%
%\end{figure}

\begin{figure}[p]
\begin{minipage}{.44\textwidth}
 \includegraphics[width=\textwidth]{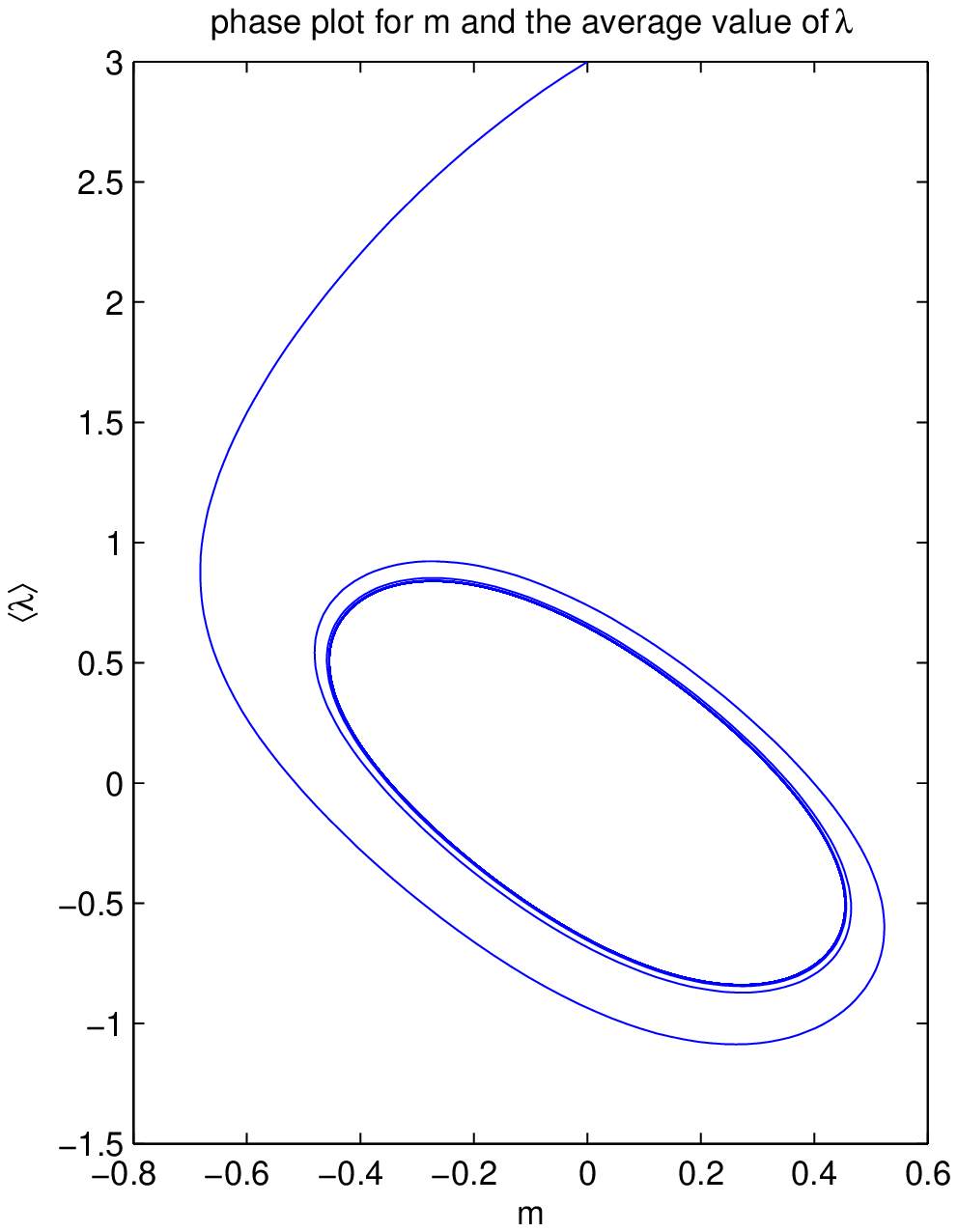}
\end{minipage}%
\begin{minipage}{.44\textwidth}
 \includegraphics[width=\textwidth]{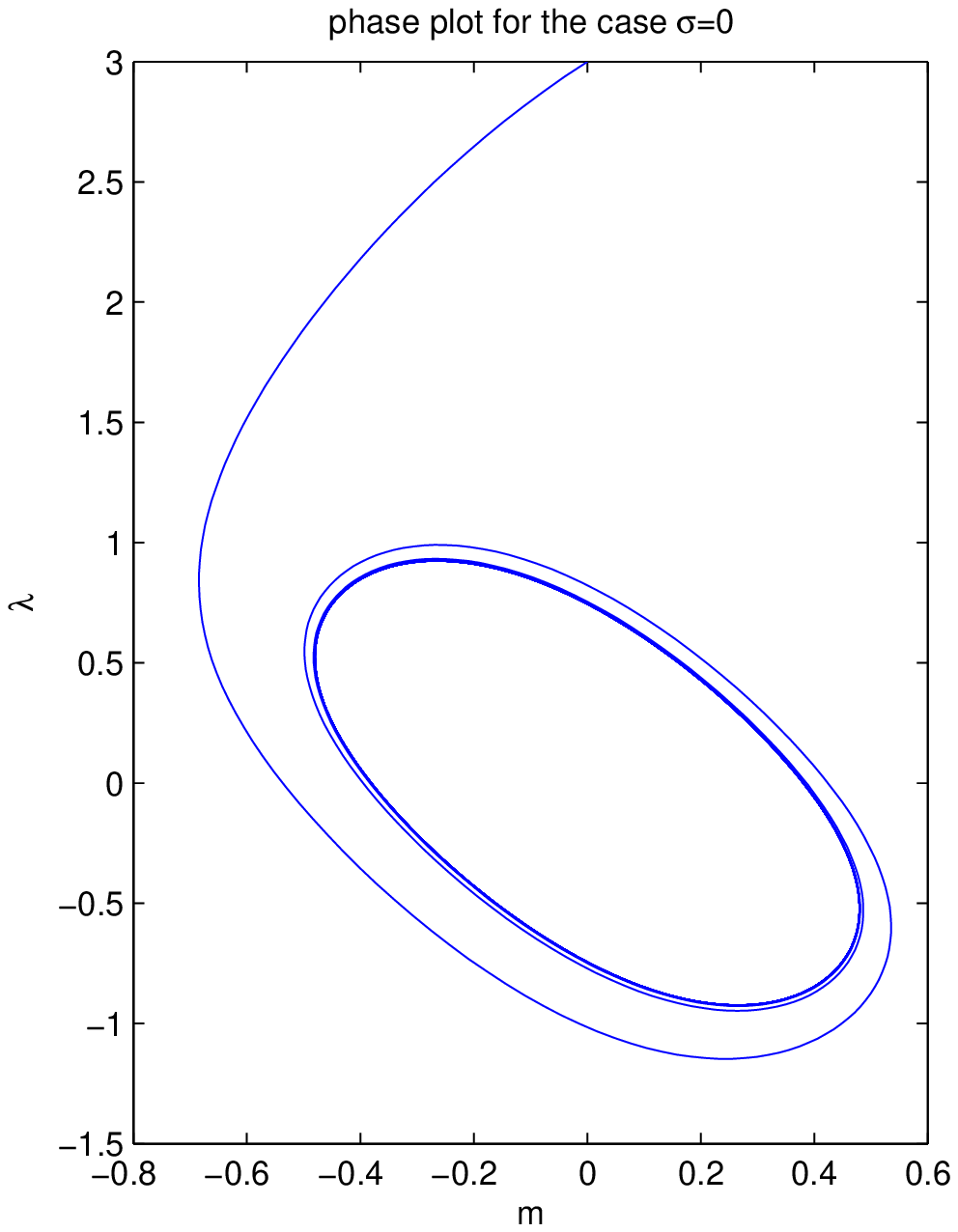}
\end{minipage}%
\caption{Phase plot of the expected value of $\l$ versus $m$ (left) and phase plot of the no-diffusion case (right). The parameters are   $\a=3$, $\b=3$, $\s=0.1$, $\l_0=3$ and $m(0)=0$.}  \label{fig7}
\end{figure}

%\begin{figure}
% \begin{center}
%  \includegraphics[width=.45\textwidth]{varl_0-1_3_3.pdf}
% \end{center}
%\caption{Plot of the variance of $\nu_t(\l)$ versus time. The variance remains bounded and quite small, and keeps oscillating in time. The parameters are the same as for figure~\ref{fig7}.}\label{fig8} 
%
%\end{figure}

\newpage

\begin{figure}[p]
\begin{minipage}{.45\textwidth}
 \includegraphics[width=\textwidth]{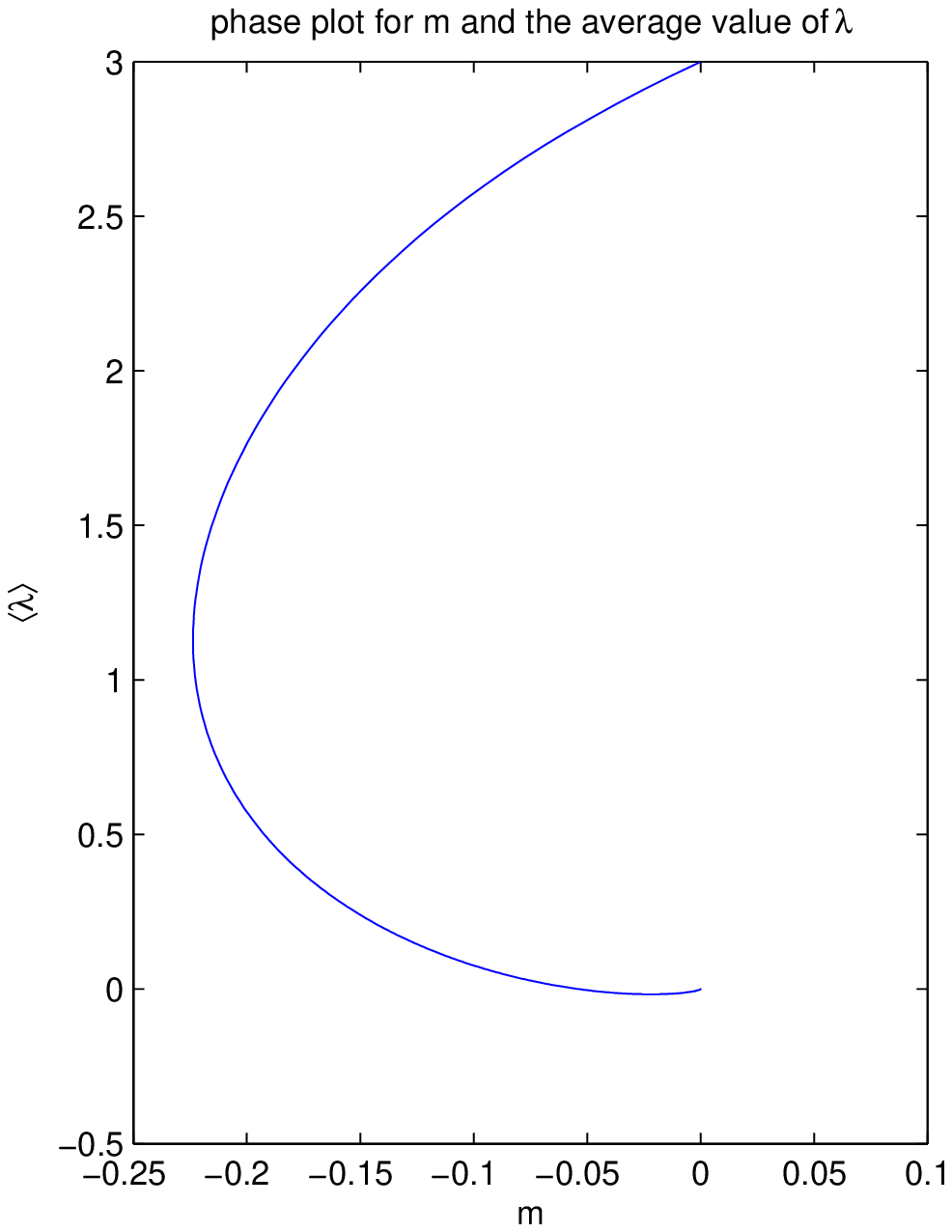}
\end{minipage}%
\begin{minipage}{.45\textwidth}
 \includegraphics[width=\textwidth]{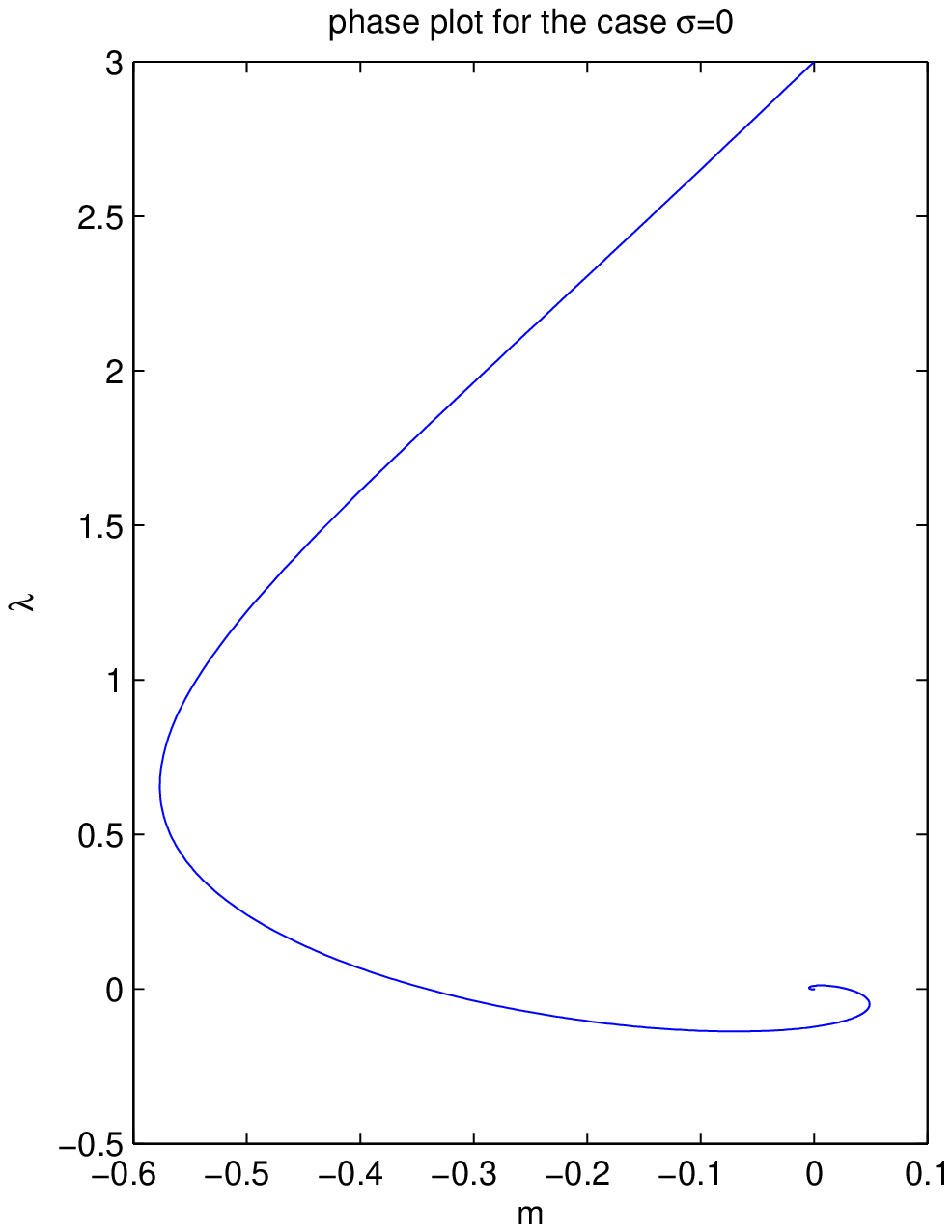}
\end{minipage}%
\caption{Phase plot of the expected value of $\l$ versus $m$ (left) and phase plot of the no-diffusion case (right). The parameters are  $\a=3$, $\b=1$, $\s=10$, $\l_0=3$ and $m(0)=0$.} \label{fig9} 
\end{figure}

%\begin{figure}
% \begin{center}
%  \includegraphics[width=.45\textwidth]{varl_10_3_1.pdf}
% \end{center}\caption{Plot of the variance of $\nu_t(\l)$ versus time. The variance grows rapidly to a large stable value. The parameters are the same as for figure~\ref{fig9}.}
%\label{fig10}
%\end{figure}

\begin{figure}[p]
\begin{minipage}{.45\textwidth}
 \includegraphics[width=\textwidth]{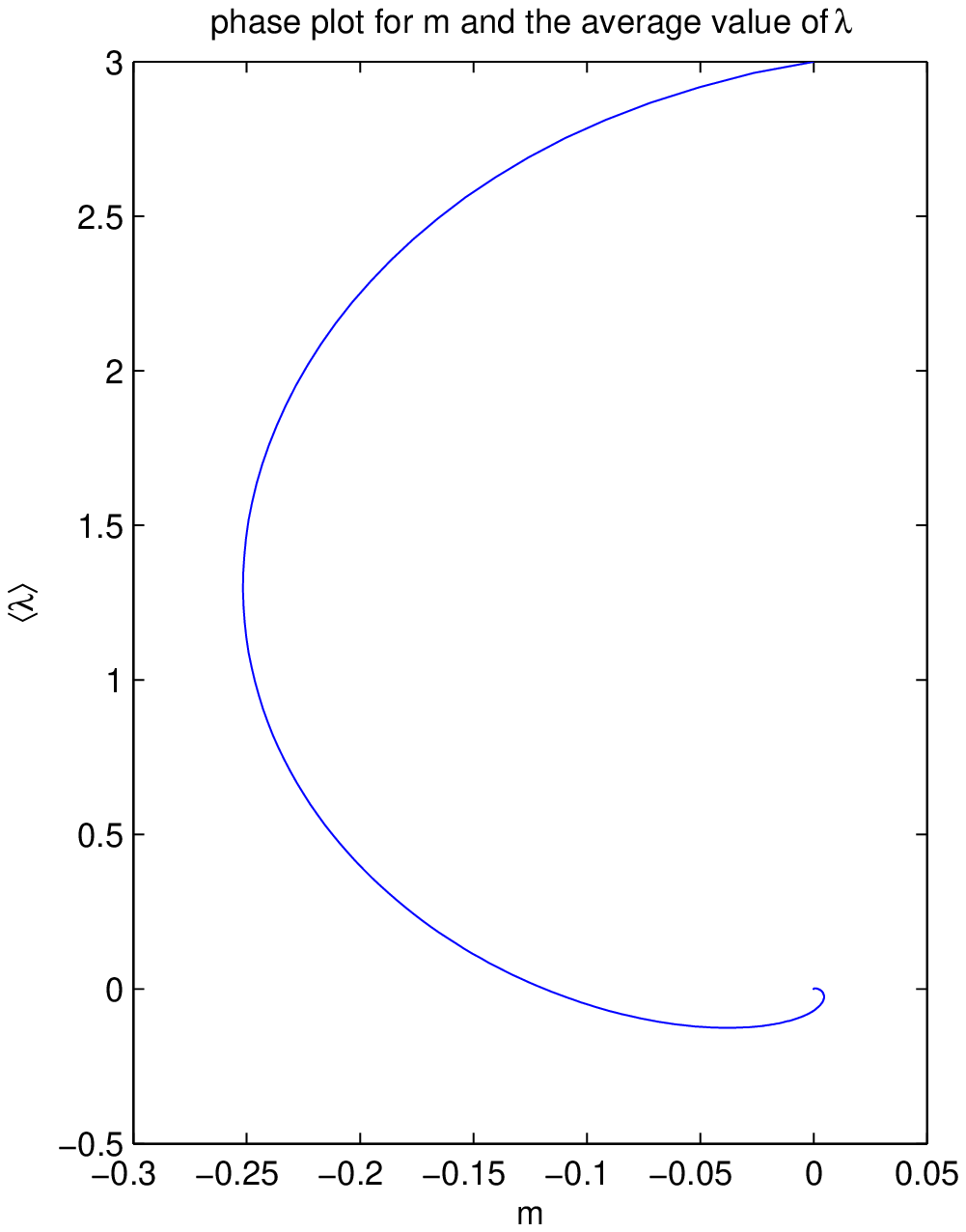}
\end{minipage}%
\begin{minipage}{.45\textwidth}
 \includegraphics[width=\textwidth]{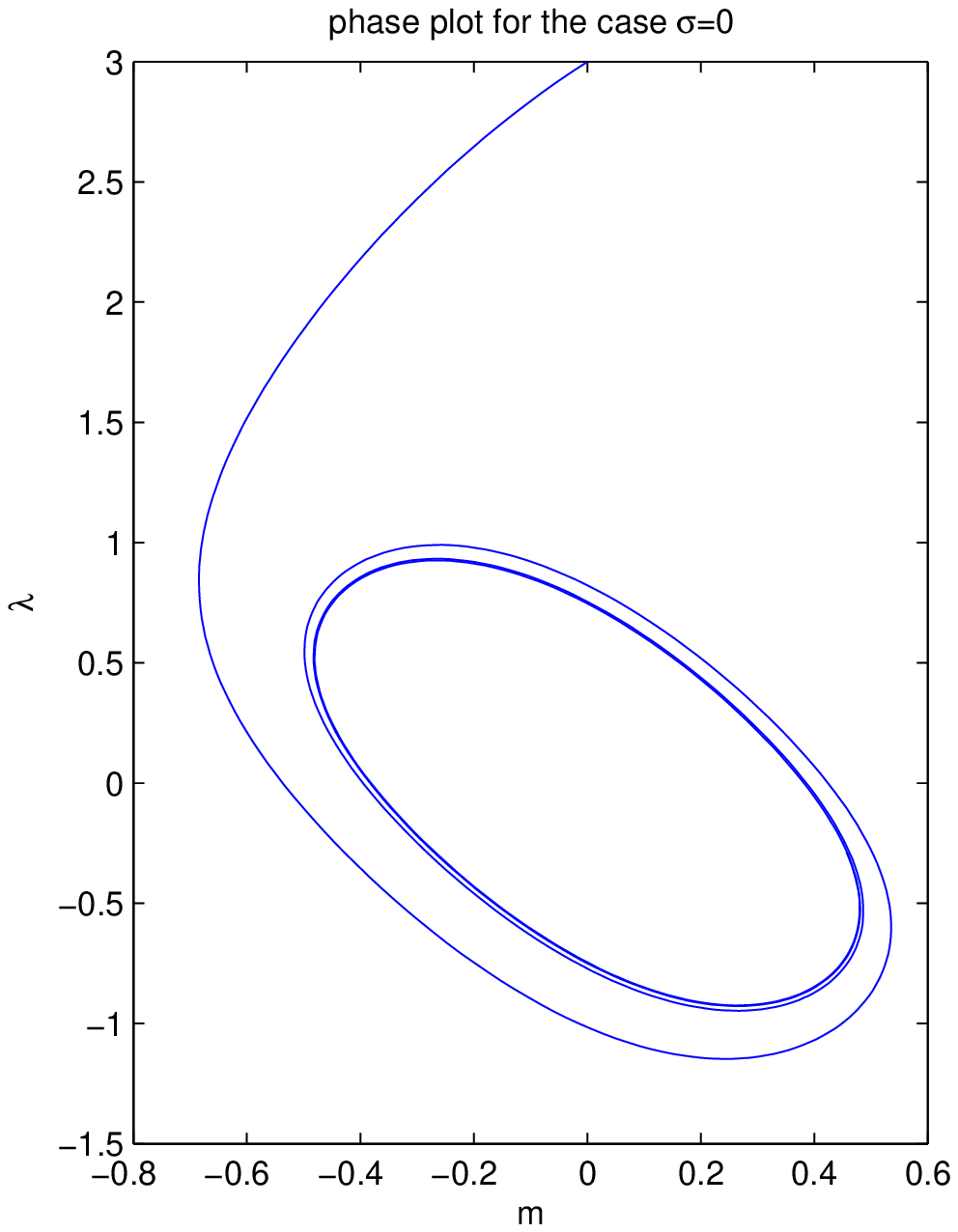}
\end{minipage}%
\caption{Phase plot of the expected value of $\l$ versus $m$ (left) and phase plot of the no-diffusion case (right). The parameters are    $\a=3$, $\b=3$, $\s=10$ and $\l_0=3$, $m(0)=0$.}\label{fig11}  
\end{figure}

%\begin{figure}
% \begin{center}
%  \includegraphics[width=.45\textwidth]{varl_10_3_3.pdf}
% \end{center}
%\caption{Plot of the variance of $\nu_t(\l)$ versus time. The variance grows rapidly to a large stable value. The parameters are the same as for figure~\ref{fig11}.}\label{fig12}
%\end{figure}

\FloatBarrier

\begin{appendix}

\section{Proof of Theorem~\mbox{\ref{th:limit}}}

For $\mathcal{X}$ a Polish space, denote by $\mathcal{P}(\mathcal{X})$ the space of probability measures on the Borel sets of $\mathcal{X}$; equip $\mathcal{P}(\mathcal{X})$ with the topology of weak convergence, which makes it a Polish space, too. As in Section~\ref{sec:micromacro}, let $P^{N}$ be the empirical measure of the $N$-particle system. We may assume that the $\{-1,1\}\times\mathbb{R}$-valued processes $(s_{j},\lambda_{j})$ have c{\`a}dl{\`a}g trajectories (i.e., trajectories that are right-continuous with limits from the left). Consequently, $P^{N}$ is a probability measure on the Borel sets of $D:= D([0,\infty),\{-1,1\}\times\mathbb{R})$, the space of $\{-1,1\}\times\mathbb{R}$-valued c{\`a}dl{\`a}g functions equipped with the Skorohod topology.
%A standard strategy for proving convergence of the sequence $(P^{N})_{N\in \mathbb{N}}$ is to first show that $(P^{N})_{N\in \mathbb{N}}$ is tight as a family of $\mathcal{P}(D)$-valued random variables and then to identify any limit point as the (unique) solution of the martingale problem associated with the (pre-)generator $\m{L}$ given by \eqref{lim_gen}. Notice that $\m{L}$ is parametrized by a probability measure on $\{-1,1\}\times\mathbb{R}$, which corresponds to the time marginals of $P\in \mathcal{P}(D)$.

The strategy of proof, here, is to represent both the microscopic and the macroscopic model as solutions of certain stochastic differential equations in order to apply results by \cite{graham1992} on propagation of chaos, which implies convergence of empirical measures. 

Let $\eta$ be Lebesgue measure restricted to the Borel sets on the interval $(0,2)$. Let $((\Omega,\mathcal{F},\Prb),(\mathcal{F}_{t})_{t\geq 0})$ be a filtered probability space satisfying the usual hypotheses rich enough to carry an independent family $(B_{i},\m{N}_{i})_{i\in \mathbb{N}}$ of one-dimensional $(\mathcal{F}_{t})$-Brownian motions $B_{i}$ and stationary $(\mathcal{F}_{t})$-Poisson random measures $\m{N}_{i}$ with characteristic measure $\eta$. For $N\in \mathbb{N}$, consider the system of It{\^o}-Skorohod equations
\begin{align} \label{eq:prelimit}
\begin{split}
	\d\lambda^{N}_{i}(t) &= -\alpha \lambda^{N}_{i}(t-)\d t + \sigma \d B_{i}(t) -\frac{\beta}{N}\sum_{k=1}^{N} \int_{(0,2)} q\left((s^{N}_{k}(t-),\lambda^{N}_{k}(t-)),u\right) \m{N}_{k}(\d u,\d t),\\
	\d s^{N}_{i}(t) &= \int_{(0,2)} q\left((s^{N}_{i}(t-),\lambda^{N}_{i}(t-)),u\right) \m{N}_{i}(\d u,\d t),\quad i\in \{1,\ldots,N\},
\end{split}
\end{align}
where
\[
	q((s,\lambda),u):= -2h(s)\cdot \mathbf{1}_{(0,1+h(s)\tanh(\lambda))}(u),\quad (s,\lambda)\in \mathbb{R}^{2},\; u\in (0,2),
\]
and $h(s):= (-1)\vee (s\wedge 1)$, $s\in \mathbb{R}$. By Theorem~1.2 in \cite{graham1992}, existence and uniqueness of solutions hold in the strong sense for the system of equations \eqref{eq:prelimit} since its coefficients are globally Lipschitz continuous; the jump coefficient, in particular, satisfies the $L^1$ Lipschitz assumption of the theorem.
%The jump coefficient, in particular, which is given by the mapping $f_{N}\!: \mathbb{R}^{2N}\times (0,2) \rightarrow \mathbb{R}^{2N}$ with
%\[
%f_{N}(.,u): \mathbb{R}^{2N}\ni
%(\boldsymbol{s},\boldsymbol{\lambda}) = \begin{pmatrix}
%(s_{1},\lambda_{1})\\
%\vdots\\
%(s_{N},\lambda_{N})
%\end{pmatrix}
%\mapsto
%\begin{pmatrix}
%\left(-\frac{\beta}{N}\sum_{k=1}^{N} q((s_{k},\lambda_{k}),u), q((s_{1},\lambda_{1}),u) \right)\\
%\vdots\\
%\left(-\frac{\beta}{N}\sum_{k=1}^{N} q((s_{k},\lambda_{k}),u), q((s_{N},\lambda_{N}),u) \right)
%\end{pmatrix}
%\in \mathbb{R}^{2N},
%\]
%satisfies, for some constant $K_{N}(\beta) > 0$, the $L^1$ Lipschitz condition
%\[
%	\int_{(0,2)} \left| f_{N}\bigl((\boldsymbol{s},\boldsymbol{\lambda}),u\bigr) - f_{N}\bigl((\boldsymbol{\tilde{s}},\boldsymbol{\tilde{\lambda}}),u\bigr) \right|du \leq K_{N}(\beta) \left| (\boldsymbol{s},\boldsymbol{\lambda}) - (\boldsymbol{\tilde{s}},\boldsymbol{\tilde{\lambda}}) \right|.
%\]
Clearly, if $s\in \{-1,1\}$, then $h(s) = s$, $h(s)\tanh(\lambda) = \tanh(s\cdot\lambda)$, and
\[
	\int_{(0,2)} q((s,\lambda),u) \eta(\d u) = -2\left(s+\tanh(\lambda)\right).
\]
Thanks to the choice of the jump heights, if $(s^{N}(0),\lambda^{N}(0))$ is such that $s^{N}_{i}(0)\in \{-1,1\}$, then $s^{N}_{i}(t)\in \{-1,1\}$ for all $t\geq 0$. Let us fix a sequence of initial conditions $(s^{N}(0),\lambda^{N}(0))_{N\in \mathbb{N}}$ such that $s^{N}(0)\in \{-1,1\}^{N}$ for all $N\in \mathbb{N}$ and $(\Prb\circ(s^{N}(0),\lambda^{N}(0))^{-1})_{N\in \mathbb{N}}$ is $\mu$-chaotic for some $\mu\in \mathcal{P}(\{-1,1\}\times\mathbb{R})$. The solution process $(s^{N},\lambda^{N})$ is then a $\{-1,1\}^{N}\times\mathbb{R}^{N}$-valued Markov process. Comparing its infinitesimal generator (cf.\ equation~(1.2) in \cite{graham1992}) with equation~\eqref{8} above shows that $(s^{N},\lambda^{N})$ is a realization of the $N$-particle microscopic model. To prove Theorem~\ref{th:limit} we thus have to prove convergence of the empirical measures $P^{N} := \sum_{i=1}^{N} \delta_{(s^{N}_{i},\lambda^{N}_{i})}$ associated with the solutions of \eqref{eq:prelimit}.

Define a function $\bar{b}\!: \mathcal{P}(\mathbb{R}^{2}) \rightarrow \mathbb{R}$ by
\[
	\bar{b}(\nu):= 2\beta\cdot \int_{\mathbb{R}^{2}} \left(h(s) + \tanh(\lambda)\right)\nu(\d s, \d\lambda).
\]
Notice that $\bar{b}$ is Lipschitz continuous with respect to the bounded Lipschitz (or Dudley) metric on $\mathcal{P}(\mathbb{R}^2)$ as well as with respect to the Wasserstein-1 (or Lipschitz) metric on $\mathcal{P}_{1}(\mathbb{R}^2)$, the space of probability measures with finite first moments. By Theorem~2.1 in \cite{graham1992}, existence and uniqueness of solutions hold in the strong sense for the McKean-Vlasov It{\^o}-Skorohod equation
\begin{align} \label{eq:limit}
\begin{split}
	\d\Lambda(t) &= -\alpha \Lambda(t)dt + \sigma \d B_{1}(t) + \bar{b}(P_{t})\d t, \\
	\d\Sigma(t) &= \int_{(0,2)} q\left((\Sigma(t-),\Lambda(t)),u\right) \m{N}_{1}(\d u,\d t),\\
	P_{t} &= \mbox{law}(\Sigma(t),\Lambda(t)).
\end{split}
\end{align}
Assume that $P_{0} = \mu = \mbox{law}(\Sigma(0),\Lambda(0))$. Set $P:= \mbox{law}(\Sigma,\Lambda)$ and observe that $P \in \mathcal{P}(D)$. Comparison of infinitesimal generators yields that the solution $(\Sigma,\Lambda)$ of \eqref{eq:limit} is a realization of the nonlinear Markov process given by equation \eqref{proc}. Moreover, the $\mathcal{P}(\{-1,1\}\times\mathbb{R})$-valued process $(P_{t})_{t\geq 0}$ coincides with the solution of equation \eqref{lim_dyn} with initial condition $P_{0}$.

For $N\in \mathbb{N}$, consider the system of It{\^o}-Skorohod equations
\begin{align} \label{eq:prelimit2}
\begin{split}
	\d\bar{\lambda}^{N}_{i}(t) &= -\alpha \bar{\lambda}^{N}_{i}(t)\d t + \sigma \d B_{i}(t) + \bar{b}(\bar{P}^{N}_{t})\d t,\\
	\d\bar{s}^{N}_{i}(t) &= \int_{(0,2)} q\left((\bar{s}^{N}_{i}(t-),\bar{\lambda}^{N}_{i}(t)),u\right) \m{N}_{i}(\d u,\d t),\quad i\in \{1,\ldots,N\},
\end{split}
\end{align}
where $\bar{P}^{N}_{t}:= \sum_{i=1}^{N}\delta_{\bar{(\lambda}^{N}_{i}(t),\bar{s}^{N}_{i}(t))}$ is the empirical measure of the solution at time $t$. Notice that $\bar{\lambda}^{N}_{i}(t) = \bar{\lambda}^{N}_{i}(t-)$ by continuity of trajectories and that all processes are stochastically continuous. Again by Theorem~1.2 in \cite{graham1992}, existence and uniqueness of solutions hold in the strong sense for the system of equations \eqref{eq:prelimit2}. If the initial condition $(\bar{s}^{N}(0),\bar{\lambda}^{N}(0))$ for \eqref{eq:prelimit2} is such that $\bar{s}^{N}_{i}(0) \in \{-1,1\}$, then $\bar{s}^{N}_{i}(t)\in \{-1,1\}$ for all $t\geq 0$. Fix the initial condition at $(\bar{s}^{N}(0),\bar{\lambda}^{N}(0)):= (s^{N}(0),\lambda^{N}(0))$. Since $s^{N}(0)$ takes values in $\{-1,1\}^{N}$ and by the continuity of Lebesgue integrals, the system of equations \eqref{eq:prelimit2} can be rewritten as
\begin{align} \label{eq:prelimit2b}
\begin{split}
	\d\bar{\lambda}^{N}_{i}(t) &= -\alpha \bar{\lambda}^{N}_{i}(t-)\d t + \sigma \d B_{i}(t) -\frac{\beta}{N}\sum_{k=1}^{N} \int_{(0,2)} q\left((\bar{s}^{N}_{k}(t-),\bar{\lambda}^{N}_{k}(t-)),u\right)\eta(\d u)\d t,\\
	\d\bar{s}^{N}_{i}(t) &= \int_{(0,2)} q\left((\bar{s}^{N}_{i}(t-),\bar{\lambda}^{N}_{i}(t-)),u\right) \m{N}_{i}(\d u,\d t),\quad i\in \{1,\ldots,N\}.
\end{split}
\end{align}
Set $\bar{P}^{N}\doteq \sum_{i=1}^{N} \delta_{(\bar{s}^{N},\bar{\lambda}^{N})}$. By Theorem~4.1 in \cite{graham1992}, the sequence $(\mbox{law}(\bar{s}^{N},\bar{\lambda}^{N}))_{N\in\mathbb{N}}$ is $P$-chaotic. This implies, by the Tanaka-Sznitman theorem (for instance, Theorem~3.2 in \cite{gottlieb1998}), that the sequence $(\bar{P}^{N})_{N\in\mathbb{N}}$ of $\mathcal{P}(D)$-valued random variables converges in distribution to the probability measure $P$. In order to establish convergence of $(P^{N})_{N\in\mathbb{N}}$ to $P$, it is therefore enough to show that
\[
	\hat{d}_{bL}\left(\mbox{law}(P^{N}),\mbox{law}(\bar{P}^{N})\right) \stackrel{N\to\infty}{\longrightarrow} 0,
\]
where $\hat{d}_{bL}$ is the bounded Lipschitz metric on $\mathcal{P}(\mathcal{P}(D))$. By definition of $\hat{d}_{bL}$ and since both $P^{N}$ and $\bar{P}^{N}$ are empirical measures for processes defined on the same stochastic basis, we have
\[
	\hat{d}_{bL}\left(\mbox{law}(P^{N}),\mbox{law}(\bar{P}^{N})\right) \leq \Mean\left[ d_{bL}\left(P^{N},\bar{P}^{N}\right) \right] \leq \frac{1}{N}\sum_{i=1}^{N} \Mean\left[ d_{Sko}\left( (s^{N}_{i},\lambda^{N}_{i}), (\bar{s}^{N}_{i},\bar{\lambda}^{N}_{i}) \right) \right],
\]
where $d_{bL}$ is the bounded Lipschitz metric on $\mathcal{P}(D)$ and $d_{Sko}$ the Skorohod metric on $D$. For $i\in \mathbb{N}$, let $\tilde{\mathcal{N}}_{i}$ be the compensated Poisson random measure associated with $\mathcal{N}_{i}$, that is, $\tilde{\mathcal{N}}_{i}(\d u,\d t) = \mathcal{N}_{i}(\d u,\d t) - \eta(\d u)dt$. Then for $T > 0$, $i\in \{1,\ldots,N\}$, $N\in \mathbb{N}$,
\begin{align*}
	& \Mean\left[\sup_{t\in [0,T]} |\lambda^{N}_{i}(t) - \bar{\lambda}^{N}_{i}(t)|\right] \\
\begin{split}
	&\leq \alpha \Mean\left[ \int_{0}^{T} |\lambda^{N}_{i}(t-) - \bar{\lambda}^{N}_{i}(t-)|\d t \right] + \Mean\left[\sup_{t\in[0,T]} \left|\frac{\beta}{N} \sum_{k=1}^{N} \int_{0}^{t}\int_{(0,2)} q\left((s^{N}_{k}(r-),\lambda^{N}_{k}(r-)),u\right) \tilde{\m{N}}_{k}(\d u,\d r) \right|\right] \\
	&\quad + \frac{\beta}{N} \sum_{k=1}^{N} \Mean\left[\sup_{t\in[0,T]}\left| \int_{0}^{t}\int_{(0,2)} \left(q\left((s^{N}_{k}(r-),\lambda^{N}_{k}(r-)),u\right) - q\left((\bar{s}^{N}_{k}(r-),\bar{\lambda}^{N}_{k}(r-)),u\right)\right) \eta(\d u)\d r \right| \right]
\end{split}\\
\begin{split}
	&\leq \alpha \int_{0}^{T} \Mean\left[|\lambda^{N}_{i}(t-) - \bar{\lambda}^{N}_{i}(t-)|\right] \d t + 4\Mean\left[\left|\frac{\beta}{N} \sum_{k=1}^{N} \int_{0}^{T}\int_{(0,2)} q\left((s^{N}_{k}(t-),\lambda^{N}_{k}(t-)),u\right) \tilde{\m{N}}_{k}(\d u,\d t) \right|^{2} \right]^{1/2} \\
	&\quad + \frac{\beta}{N} \sum_{k=1}^{N} \Mean\left[\int_{0}^{T}\int_{(0,2)} \left|q\left((s^{N}_{k}(t-),\lambda^{N}_{k}(t-)),u\right) - q\left((\bar{s}^{N}_{k}(t-),\bar{\lambda}^{N}_{k}(t-)),u\right)\right| \d u\,\d t \right]
\end{split}\\
\begin{split}
	&\leq \alpha \int_{0}^{T} \Mean\left[|\lambda^{N}_{i}(t-) - \bar{\lambda}^{N}_{i}(t-)|\right] \d t + \frac{4\beta}{N}\Mean\left[\sum_{k=1}^{N} \int_{0}^{T}\int_{(0,2)} \left(q\left((s^{N}_{k}(t-),\lambda^{N}_{k}(t-)),u\right)\right)^{2} \d u\,\d t  \right]^{1/2} \\
	&\quad + \frac{6\beta}{N} \sum_{k=1}^{N} \int_{0}^{T} \Mean\left[ |(s^{N}_{k}(t-) - \bar{s}^{N}_{k}(t-)| + |\lambda^{N}_{k}(t-) - \bar{\lambda}^{N}_{k}(t-)| \right]\d t
\end{split}\\
\begin{split}
	&\leq \alpha \int_{0}^{T} \Mean\left[|\lambda^{N}_{i}(t-) - \bar{\lambda}^{N}_{i}(t-)|\right] \d t + \frac{8\beta\sqrt{2T}}{\sqrt{N}}\\
	&\quad + \frac{6\beta}{N} \sum_{k=1}^{N} \int_{0}^{T} \Mean\left[ |(s^{N}_{k}(t-) - \bar{s}^{N}_{k}(t-)| + |\lambda^{N}_{k}(t-) - \bar{\lambda}^{N}_{k}(t-)| \right]\d t.
\end{split}
\end{align*}
Since
\begin{align*}
	\Mean\left[\sup_{t\in [0,T]} |s^{N}_{i}(t) - \bar{s}^{N}_{i}(t)|\right] &\leq \Mean\left[\int_{0}^{T}\int_{(0,2)} \left|q\left((s^{N}_{i}(t-),\lambda^{N}_{i}(t-)),u\right) - q\left((\bar{s}^{N}_{i}(t-),\bar{\lambda}^{N}_{i}(t-)),u\right)\right| \d u\,\d t \right] \\
	&\leq 6\int_{0}^{T} \Mean\left[ |(s^{N}_{i}(t-) - \bar{s}^{N}_{i}(t-)| + |\lambda^{N}_{i}(t-) - \bar{\lambda}^{N}_{i}(t-)| \right]\d t,
\end{align*}
it follows that
\begin{align*}
	& \frac{1}{N}\sum_{i=1}^{N}\Mean\left[\sup_{t\in [0,T]} \left(|s^{N}_{i}(t) - \bar{s}^{N}_{i}(t)| + |\lambda^{N}_{i}(t) - \bar{\lambda}^{N}_{i}(t)|\right) \right] \\
	&\leq \frac{8\beta\sqrt{2T}}{\sqrt{N}} + \frac{\alpha + 6 + 6\beta}{N}\sum_{i=1}^{N} \int_{0}^{T} \Mean\left[|(s^{N}_{i}(t-) - \bar{s}^{N}_{i}(t-)| + |\lambda^{N}_{i}(t-) - \bar{\lambda}^{N}_{i}(t-)| \right]\d t \\
	&\leq \frac{8\beta\sqrt{2T}}{\sqrt{N}} + \left(\alpha + 6 + 6\beta\right) \int_{0}^{T} \frac{1}{N}\sum_{i=1}^{N}\Mean\left[\sup_{r\in[0,t]}\left(|(s^{N}_{i}(r) - \bar{s}^{N}_{i}(r)| + |\lambda^{N}_{i}(r) - \bar{\lambda}^{N}_{i}(r)|\right) \right]\d t.
\end{align*}
An application of Gronwall's lemma yields, for every $T > 0$,
\[
	\frac{1}{N}\sum_{i=1}^{N}\Mean\left[\sup_{t\in [0,T]} \left(|s^{N}_{i}(t) - \bar{s}^{N}_{i}(t)| + |\lambda^{N}_{i}(t) - \bar{\lambda}^{N}_{i}(t)|\right) \right] \stackrel{N\to\infty}{\longrightarrow} 0,
\]
which implies the desired convergence.

\end{appendix}

\section*{Acknowledgements}
 We are grateful to G. Giacomin for deep comments and suggestions.
The authors acknowledge the financial support of the Research Grant of the Ministero dell’Istruzione,
dell’Università e della Ricerca: PRIN 2009, Complex Stochastic Models and their Applications in Physics
and Social Sciences.

\bibliographystyle{plain}
\bibliography{tanh}

\end{document}